\newcommand{\tn}[1]{\quad \textnormal{#1}\quad }
\newcommand{\IR}{{\mathbb{R}}}
\newcommand{\CD}{{\mathfrak{D}}}
\newcommand{\bmt}{\left[ \begin{array}{ccccccccc}}
	\newcommand{\emt}{\end{array}\right]}
\newcommand{\bean}{\begin{eqnarray*}}
	\newcommand{\eean}{\end{eqnarray*}}
\newcommand{\bea}{\begin{eqnarray}}
	\newcommand{\eea}{\end{eqnarray}}
\newcommand{\eq}{\begin{equation}\begin{array}{lllllllll}}
		\newcommand{\ee}{\end{array}\end{equation}}
\newcommand{\eqn}{\begin{equation*}\begin{array}{lllllllll}}
		\newcommand{\een}{\end{array}\end{equation*}}
\crefname{hypothesis}{Hypothesis}{Hypotheses}
\title{Smoothing analysis of two robust  multigrid methods  for  elliptic optimal control problems\thanks{Submitted to the editors DATE.
	}}
\author{Yunhui He\thanks{Department of Computer Science, The University of British Columbia, Vancouver, BC, V6T 1Z4, Canada.
		(\email{yunhui.he@ubc.ca}).}
	\and Jun Liu\thanks{Department of Mathematics and Statistics, Southern Illinois University Edwardsville, Edwardsville, IL 62026, USA.
		(\email{juliu@siue.edu}).}}
\begin{document}

\maketitle

\begin{abstract}
  In this paper we study and compare two multigrid relaxation schemes with coarsening by two, three, and four for solving elliptic  sparse optimal control problems with control constraints. First, we perform a detailed local Fourier analysis (LFA) of a well-known collective Jacobi relaxation (CJR) scheme, where the optimal smoothing factors are derived. This insightful analysis reveals that the optimal relaxation parameters depend on  mesh size and regularization parameters, which was not investigated in literature. Second, we propose and analyze a new mass-based Braess-Sarazin relaxation (BSR) scheme, which is proven to provide smaller smoothing factors than the CJR scheme when $\alpha\ge ch^4$ for a small constant $c$. Here $\alpha$ is the regularization parameter and $h$ is the spatial mesh step size. These schemes are successfully extended to control-constrained cases through the semi-smooth Newton method. Coarsening by three or four with BSR is competitive with coarsening by two. 
  Numerical examples are presented to validate our theoretical outcomes. The proposed inexact BSR (IBSR) scheme, where two preconditioned conjugate gradients iterations are applied to the Schur complement system, yields better computational efficiency than the CJR scheme.
\end{abstract}

\begin{keywords}
  multigrid, local Fourier analysis, smoothing factor, collective Jacobi relaxation, Braess-Sarazin relaxation, semi-smooth Newton method 
\end{keywords}

\begin{AMS}
   49M25, 49K20, 65N55, 65F10
\end{AMS}
 
\section{Introduction}

Optimal control problems with partial differential equation (PDE) constraints \cite{Lions1971,Hinze2009,Troltzsch2010} appear ubiquitously in all disciplines of science and engineering that involving PDE models. In the past few decades, many efficient numerical algorithms \cite{Ito2008,Ulbrich2011,Borzi2012} were developed for solving such PDE-constrained optimization problems, which are usually much more computationally expensive than solving standalone PDE  models due to extra constraints and non-smooth cost functionals. 
In this paper, we will study and compare two parameter-robust geometric multigrid methods for solving the linear elliptic optimal control problem with control constraints
and $L^1$ cost functional that promoting sparsity in control design \cite{Stadler2007}. 

Let $\Omega\subset\IR^2$ be a bounded and open domain with Lipschitz boundary $\partial\Omega$.
We consider the following  distributed optimal control problem \cite{Stadler2007,Porcelli2017} of
\eq \label{goal}
\min_{u\in U_{ad}}\quad J(y,u)=\frac{1}{2} \|y-g\|^2_{L^2(\Omega)}+\frac{\alpha}{2}\|u\|^2_{L^2(\Omega)}
+\beta\|u\|_{L^1(\Omega)}
\ee
subject to a Poisson equation
\eq \label{state}
-\Delta y =f+u\ \tn{in} \Omega \quad \tn{and}\quad y=0\ \tn{on}\ \partial\Omega,
\ee
where  $u\in U_{ad}$ is the control, $g\in L^2(\Omega)$ is the target state, 
$\alpha>0$ and $\beta\ge 0$ are regularization parameters, and $f\in L^2(\Omega)$. 
Here the boxed convex set $U_{ad}$ of admissible controls is defined as
\eqn \label{adcontrol}
U_{ad}=\{u\in L^2(\Omega)\ |\ u_0\le u \le u_1 \quad a.e.\tn{in}\Omega\},
\een
where $u_0,u_1\in L^{2} (\Omega)$ with $u_0< 0< u_1$.
It is desirable to develop efficient numerical algorithms with mesh-independent convergence rates that are also robust with respect to the parameters $\alpha$ and $\beta$.
We will focus on multigrid methods due to the ellipticity.

Broadly speaking, they are two different groups of numerical algorithms for solving such constrained optimization problem (\ref{goal}-\ref{state}): {discretize-then-optimize (DTO)} and {optimize-then-discretize (OTD)}. 
In  DTO approaches, one first discretizes the continuous optimization problem (\ref{goal}-\ref{state}) to 
obtain a  large-scale discretized finite-dimensional optimization problem, 
which is then approximately solved by black-box numerical optimization algorithms (e.g. gradient-based method \cite{Schindele2016} and interior-point method \cite{Pearson2019,Bergamaschi2021}) that are scalable to the number of decision variables.  
Such a DTO approach is attractive in some applications since it provides more flexibility in handling additional control and/or state constraints, gradient bounds, and more advanced regularization terms.
In OTD approaches, one first derives the first-order necessary optimality PDE system, and then
discretizes the continuous PDE optimality system with appropriate discretization schemes, such as  finite difference and finite element method.
The resultant discretized linear/nonlinear systems can be then solved by various efficient iterative solvers.
Very recently, some hybrid optimize-discretize-optimize approaches based on inexact alternating direction method of multipliers (ADMM) algorithms \cite{Song2017,Song2018a,Song2018,Chen2020} were also proposed for such PDE-constrained optimization problems.
In both approaches,  some efficient and effective structure-exploiting preconditioning techniques (see e.g. \cite{Rees2010,Bai2010,Herzog2010,Schiela2014,Gong2017}) are often required to achieve mesh-independent and parameter-robust  fast convergence rates.

In this paper, we will follow the OTD approach within the framework of semi-smooth Newton (SSN) method \cite{Chen2000,Michael2004,Hintermuller2002}, which was shown to be equivalent to the  primal-dual active-set method \cite{Bergounioux1999}.
More specifically, we will focus on developing and analyzing two multigrid algorithms for solving the Jacobian linear system in each SSN iteration, where the special saddle-point structure of Jacobian is utilized to design effective relaxation schemes. 
For elliptic optimal control problems without sparsity cost term, both linear and nonlinear multigrid methods were extensively studied with convergence analysis under certain assumptions, see for example
\cite{Borz2005,Borzi2009,Lass2009,takacs2011convergence,Schberl2011,Borzi2012,Takacs_2013,Liu2014,Drgnescu2016} and the references therein. 
However, to the best of our knowledge, the precise convergence rates of these developed multigrid algorithms for optimality PDE system were rarely estimated, which is significantly more difficult than the corresponding multigrid algorithm for a single PDE.
We contribute to fill this gap by conducting a through  Local Fourier analysis (LFA) of the studied two relaxation schemes: collective Jacobi relaxation (CJR) and mass-based Braess-Sarazin  relaxation (BSR).

LFA  is a useful tool to help better understand and design fast multigrid methods for parameter-dependent optimal control problems,   where the regularization parameters often post some challenges in achieving fast and robust convergence rates. 
It has been successfully applied to multigrid methods for optimal control problems.  In \cite{borzi2002accuracy}, convergence factor estimates of the two-grid method with  collective Gauss–Seidel relaxation for the unconstrained optimality system were obtained by LFA.  
The similar idea was extended to nonlinear full approximation storage (FAS) multigrid for handling constrained nonlinear optimality system \cite{Borz2005,Lass2009,Borzi2012}.
In \cite{takacs2011convergence}, the authors showed the mesh-independent convergence rates of collective
Jacobi and Gauss-Seidel relaxations for unconstrained optimal control problem, but they did not derive optimal damping parameter.  In \cite{Schberl2011}, a robust multigrid algorithm based on a transformed  symmetric Uzawa-type smoother was analyzed, where the deterioration of the  convergence rates is observed if $\alpha$ gets very small compared to  $h^4$.
In \cite{engel2011multigrid}, the authors developed a
multigrid method with a constraint preconditioned Richardson iteration as the smoother
in the framework of primal-dual active-set method, which requires a similar condition $\alpha>h^4/4$ to assure convergence.   In \cite{pillwein2011computing},  LFA based on symbolic computation (on Mathematica) was used to estimate the smoothing factor of collective Jacobi relaxation based on finite element method for unconstrained problem. In \cite{he2021novel}, the author studies a novel Braess-Sarazin relaxation scheme for the unconstrained cases discretized by finite element method, where the inverse of mass matrix is approximated by the Laplacian discretized by the five-point finite difference stencil.

We point out that most of the existing LFA studies on optimal control problems only focus on standard coarsening and only \textit{numerically} compute the optimal damping parameter or smoothing factor. In this work, we derive optimal damping parameter and optimal smoothing factor for the well-known CJR with coarsening by two, three and four, without any restrictions on $\alpha$ and $h$.   From this analysis, we see that the optimal damping parameter is dependent on mesh size and regularization parameter, then in our multigrid methods, we use such optimal damping parameter to get improved performance. Moreover, our  analysis  clearly tells us how the optimal smoothing factor is changed as a function of mesh size and regularization parameter.  As an improvement, we further propose a mass-based BSR scheme. We derive a upper bound on the optimal smoothing factor for BSR,  which shows that BSR is  unconditionally convergent,  and that  the optimal smoothing factor is smaller than the CJR scheme. The BSR is a  parameter-robust
multigrid algorithm. Our numerical results show that BSR   outperforms CJR.  Although our analysis only focuses  on unconstrained optimality system, we numerically extend our algorithms to the Jacobian systems from control-constrained cases within the framework of SSN method. 
Finally, we mention that multigrid algorithms based on  coarsening by three or four were rarely studied in literature; see \cite{Lass2009,JEDendy2010,Yavneh2012,liu2020multigrid}.
One possible reason is significantly degraded convergence rates compared to  standard coarsening.
In particular, this work is inspired by the recent work \cite{he2022optimal} on multigrid solvers with coarsening by three for the Stokes system, where the author proves a mass-based BSR scheme  achieves a convergence rate very close to standard coarsening.
In this work, the similar nice conclusions are obtained for our considered elliptic optimal control problems.

The paper is organized as follows. In the next section we present the first-order optimality system and the corresponding SSN iteration.
In Section \ref{SecLFA}, LFA for both CJR and BSR schemes are provided in detail to  estimate the optimal smoothing factor and derive the corresponding optimal damping parameter.
In Section \ref{SecNum},  we present some numerical examples (including an inexact BSR scheme)   to verify our theoretical findings.
Finally, some conclusion and remarks are given in Section \ref{SecFinal}.
The practical implementation of both CJR and BSR schemes is given in Appendix A.
\section{First-order optimality system and semi-smooth Newton method} \label{SecSSN}
Following \cite{Stadler2007}, the first-order necessary optimality conditions of (\ref{goal}--\ref{state}) are given by 
\eq \label{opt1}
-\Delta y -u=f\ \tn{in} \Omega \quad \tn{and}\quad y=0\ \tn{on}\ \partial\Omega, \\
-\Delta p +y=g\ \tn{in} \Omega \quad \tn{and}\quad p=0\ \tn{on}\ \partial\Omega, 
\ee
together with the complementary condition (in explicit form)
\eq
u=\Phi_{\alpha,\beta}(p),
\ee
where $p$ is the adjoint state and 
\eq \label{uproj}
\Phi_{\alpha,\beta}(p)&:=&\frac{1}{\alpha}[\max(0,p-\beta)+\min(0,p+\beta)  \\  &&\quad -\max(0,p-\beta-\alpha u_1)-\min(0,p+\beta-\alpha u_0) ] .
\ee
For our considered strongly convex problems, such necessary optimality conditions are also sufficient.
Figure \ref{FigPhiShape} illustrates how the control $u$ depends on the adjoint state $p$ via the nonlinear mapping $u=\Phi_{\alpha,\beta}(p)$ with a set of selected parameters, where the sparsity is due to $u\equiv 0$ whenever $|p|\le\beta$. In particular,  if $\beta$ is sufficiently large, then the optimal control will be just zero everywhere.
\begin{figure}[htp!]
	\begin{center}
		\includegraphics[width=1\textwidth]{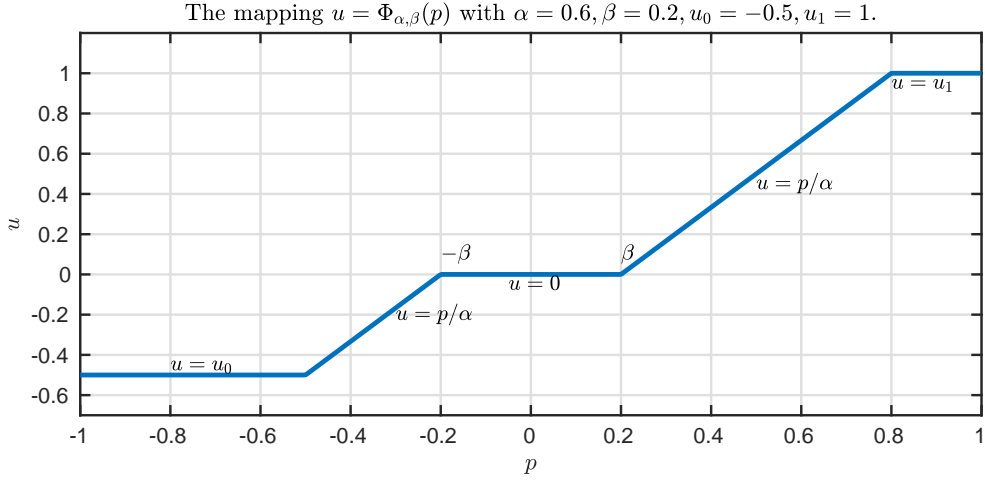}
	\end{center} 
	\caption{The piecewise shape of the  function or mapping $u=\Phi_{\alpha,\beta}(p)$.}
	\label{FigPhiShape}
\end{figure}

{By} substituting (\ref{uproj}) into {the} optimality conditions (\ref{opt1}) to eliminate $u$, we {thus} obtain 
the following reduced non-smooth nonlinear optimality system  
\eq \label{opt1A}
-\Delta y -\Phi_{\alpha,\beta} (p)&=f\ \tn{in} \Omega \quad \tn{and}\quad y=0\ \tn{on}\ \partial\Omega, \\
-\Delta p +y&=g\ \tn{in} \Omega \quad \tn{and}\quad p=0\ \tn{on}\ \partial\Omega. 
\ee
If the sparsity $L^1$ cost term is not present (that is $\beta=0$), then $\Phi_{\alpha,\beta}$ is simplified into the following pointwise projection (onto $U_{ad}$) characterization
\eqn \label{uproj0}
 \Phi_{\alpha,\beta} (p)=\frac{1}{\alpha}\left[p-\max(0,p-\alpha u_1)-\min(0,p-\alpha u_0) \right]=\min\left(u_0,\max\left(u_1,p/\alpha\right)\right) .
\een
If $\beta>0$ without control constraints (i.e. $U_{ad}=L^2(\Omega)$), then $\Phi_{\alpha,\beta}$ becomes
\eqn \label{uproj_beta}
 \Phi_{\alpha,\beta} (p)=\frac{1}{\alpha}\left[\max(0,p-\beta)+\min(0,p+\beta) \right] .
\een
If $\beta=0$ without control constraints, then
it gives the linear relation
$\Phi_{\alpha,\beta} (p)=p/\alpha$.

\subsection{The linear case with $\beta=0$ and no control constraints} 

With $\beta=0$ and no control constraints ($U_{ad}=L^2(\Omega)$), the optimality system (\ref{opt1A})   simplifies to  
\eq \label{opt1U}
-\Delta y -p/\alpha&=f\ \tn{in} \Omega \quad \tn{and}\quad y=0\ \tn{on}\ \partial\Omega, \\
-\Delta p +y&=g\ \tn{in} \Omega \quad \tn{and}\quad p=0\ \tn{on}\ \partial\Omega. 
\ee
Let $L_h=-\Delta_h$ denotes the discretized negative Laplacian by the five-point center finite difference method (with a uniform mesh step size $h=1/N$). The full finite difference discretization of system (\ref{opt1U}) leads to a two-by-two block linear system 
\eq \label{opt1Uh}
A_h v_h:=\bmt L_h & -I_h/\alpha \\ I_h & L_h \emt 
\bmt y_h\\ p_h \emt=  \bmt f_h\\ g_h \emt =:b_h,
\ee
where $I_h\in\IR^{n\times n}$ with $n=(N-1)^2$ is an identity matrix and the vectors $f_h,g_h,y_h,p_h$ denote the corresponding functions $f,g,y,p$ over all the spatial grid points $\Omega_h=\{(ih,jh):1\le i,j\le (N-1)\}$.
If the $P_1$ finite element method is used, then the matrix $L_h$ and $I_h$ will be replaced
by the corresponding stiffness and mass matrix, respectively.  
The proposed algorithms can be extended to treat finite element discretization after necessary modifications; see \cite{he2021novel} for related discussion.
\subsection{The nonlinear case with $\beta>0$ and  control constraints}  
To solve the non-smooth nonlinear optimality system (\ref{opt1A}), we will use the well-established  SSN method for handling the non-smooth operator $\Phi_{\alpha,\beta}$. 
Define the pointwise max- and min-operators: $F_{\max}(v)=\max(0,v)$ and $F_{\min}(v)=\min(0,v)$, respectively. The generalized derivative of $F_{\max}$ and $F_{\min}$ given by 
$\partial F_{\max}(v)(\bm x)=\mathbbm{1}_{\{v(\bm x)\ge 0\}}$ 
and $\partial F_{\min}(v)(\bm x)=\mathbbm{1}_{\{v(\bm x)\le 0\}}$, where $\mathbbm{1}_X$ is the indicator function on a set $X$.
With these notations, the generalized derivative of $\Phi_{\alpha,\beta}$ (see Figure \ref{FigPhiShape}) reads 
\eq \label{uproj_gd}
\partial\Phi_{\alpha,\beta}(p) =\frac{1}{\alpha} [&\partial F_{\max}(p-\beta)+\partial F_{\min}(p+\beta)\\
&-\partial F_{\max}(p-\beta-\alpha u_1)-\partial F_{\min}(0,p+\beta-\alpha u_0)  ].
\ee
We now formulate the full discretization of the nonlinear optimality system (\ref{opt1A}) into
\eq \label{opt1Ah}
\mathcal{F}(y_h,p_h):=
\bmt L_h y_h -\Phi_{\alpha,\beta} (p_h)-f_h\\
L_h p_h +y_h -g_h
\emt =
\bmt 0\\ 0
\emt ,
\ee
which can be solved by the SSN iterations (start with a given initial guess $(y_h^{(0)},p_h^{(0)})$):
\eq \label{SSNiter}
\bmt y_h^{(k+1)}\\p_h^{(k+1)} \emt=
\bmt y_h^{(k)}\\p_h^{(k)} \emt-   \left[\partial\mathcal{F}(y_h^{(k)},p_h^{(k)})\right]^{-1}   \mathcal{F} (y_h^{(k)},p_h^{(k)})
=:\bmt y_h^{(k)}\\p_h^{(k)} \emt- \bmt \delta y_h\\ \delta p_h \emt.
\ee
Clearly, at each SSN iteration we have to solve a Jacobian system
of structure:
\eq \label{opt1Jh}
\partial\mathcal{F}(y_h^{(k)},p_h^{(k)}) \bmt \delta y_h\\ \delta p_h \emt=\bmt L_h & -\partial\Phi_{\alpha,\beta}(p^{(k)}) \\ I_h & L_h \emt 
\bmt \delta y_h\\ \delta p_h \emt= \mathcal{F} (y_h^{(k)},p_h^{(k)}),
\ee
where $\partial\Phi_{\alpha,\beta}(p^{(k)})=\CD_h^{(k)}/\alpha$ with $\CD_h^{(k)}$  being a diagonal $\{0,1\}$ matrix (with only 0 or 1 on the main diagonal) depending on $p^{(k)}$. Notice the value of $\beta$ only changes the 0-1 pattern of $\CD_h^{(k)}$ by cutting off those points with $|p(\bm x)|\le \beta$.  
In practice for better efficiency, the Jacobian systems (\ref{opt1Jh}) are only approximately solved, say by a few multigrid iterations, which gives so-called inexact SSN method.
To achieve a robust global convergence, we will also incorporate the back-tracking line-search based globalization strategy as introduced in \cite{Martnez1995} and also used in \cite{Porcelli2017}.
Since our main focus lies in developing fast multigrid solver for solving the Jacobian systems, we will not further discuss the convergence issues of such an inexact SSN method.
Compared with the discretized unconstrained optimality system (\ref{opt1Uh}), the only difference in the Jacobian system (\ref{opt1Jh}) is $I_h$ becomes $\CD_h^{(k)}$ in the (1,2) block, for which our proposed multigrid solvers are expected to work very effectively since their fast convergence rates are very robust with respect to the parameter $\alpha$.

\section{LFA of collective Jacobi relaxation and Braess-Sarazin relaxation} \label{SecLFA}
We consider multigrid  methods for solving linear system (\ref{opt1Uh}). In multigrid, the fixed-point type relaxation scheme   has the following form
\begin{equation}\label{eq:relaxation-scheme}
	v_h^{k+1}=v_h^{k}+\omega B^{-1}_h(b_h-A_hv_h^k),
\end{equation}
where $B_h$ approximates $A_h$ and $\omega\in\IR$ is a damping parameter to be determined.

In the  CJR, the matrix $B_h$ in \eqref{opt1Uh} is given by
\begin{equation}\label{eq:CJ-relaxation}
	B_h=B_{J}:= \begin{bmatrix}
		D_h& -  I_h/\alpha\\
		I_h & D_h
	\end{bmatrix}, \quad\text{with}\,\,  D_h={\rm diag}(L_h).
\end{equation}  
To obtain a better relaxation scheme, it is important to construct a good approximation to the negative Laplacian matrix $L_h$. In \cite{CH2021addVanka}, it has been shown that the mass matrix $Q_h$ (obtained from bilinear finite elements) with stencil representation
\begin{equation}\label{eq:mass-stencil}
	Q_h =\frac{h^2}{36}\begin{bmatrix}
		1 & 4 & 1  \\
		4 & 16& 4 \\
		1& 4 & 1   
	\end{bmatrix}
\end{equation} 
is a good approximation to the inverse of $L_h$.   
Due to the saddle-point structure of $A_h$ in \eqref{opt1Uh}, here, we propose a mass-based  BSR scheme, where $B_h$ in \eqref{opt1Uh} reads
\begin{equation}\label{eq:exact-BSR-relaxation}
	B_h=B_m:= \begin{bmatrix}
		C_h& -  I_h/\alpha\\
		I_h & L_h
	\end{bmatrix}, \quad\text{with}\,\, C_h=Q^{-1}_h.
\end{equation} 
By (\ref{eq:relaxation-scheme}), the relaxation error operator  of both CJR  and BSR for solving \eqref{opt1Uh}  is
\begin{equation}\label{eq:relaxation-error-operator}
	S_h = I-\omega B^{-1}_h A_h.
\end{equation} 
Once $B_h$ is chosen, the next task is to select a good or  optimal damping parameter such that the multigrid converges as fast as possible. To achieve this goal, we apply LFA \cite{trottenberg2000multigrid,wienands2004practical} to identify  optimal damping parameter and examine smoothing property of CJR and exact BSR with coarsening by two, three and four. For practical implementation, we propose to use an inexact BSR, where we apply a few preconditioned conjugate gradients (PCG) iterations to solve the Schur complement system inexactly. We emphasize that the matrices $B_h^{-1}$ and $Q_h^{-1}$ are never explicitly constructed, since only the matrix-vector product $B_h^{-1}z_h$ is needed in relaxation iterations.
See Appendix A for a detailed discussion.
\subsection{Local Fourier analysis}
There are two important factors in LFA: LFA smoothing factor and LFA two-grid convergence factor, which are computed from the {\it symbol} of corresponding operators. In many cases, LFA smoothing factor can offer a good prediction of LFA two-grid convergence factor and actual multigrid performance. Thus,   we focus on analyzing the LFA smoothing factor.   For multigrid methods, we consider $q$-coarsening, where $q=2,3,4$. The corresponding low and high frequencies for LFA is defined as
\begin{equation}
	\boldsymbol{\theta}=(\theta_1,\theta_2) \in T^{\rm L_q} =\left(-\frac{\pi}{q}, \frac{\pi}{q}\right]^2, \quad 
	\boldsymbol{\theta} \in T^{\rm H_q} =\left(-\frac{\pi}{2}, \frac{3 \pi}{2}\right]^2 \backslash T^{\rm L_q}. 
\end{equation} 
We now  give some definitions of LFA, following the standard notations in \cite{trottenberg2000multigrid}. 
\begin{definition}
	Let $L_h$ be a scalar operator represented by stencil $[s_{\boldsymbol{\kappa}}]_h$ acting on grid $G_h$ as
	\begin{equation*}
		L_{h}\phi_{h}(\boldsymbol{x})=\sum_{\boldsymbol{\kappa}\in{V}}s_{\boldsymbol{\kappa}}\phi_{h}(\boldsymbol{x}+\boldsymbol{\kappa}h),
	\end{equation*}
	where  $s_{\boldsymbol{\kappa}}\in \mathbb{R}$ or $\mathbb{C}$  is constant,   $\phi_{h}(\boldsymbol{x}) \in l^{2} ({G}_{h})$, and  ${V}$ is a finite index set. 
	Then, the  symbol of $L_{h}$ is given by
	\begin{equation}\label{eq:symbol-form}
		\widetilde{L}_{h}(\boldsymbol{\theta})=\displaystyle\sum_{\boldsymbol{\kappa}\in{V}}s_{\boldsymbol{\kappa}}e^{i\boldsymbol{\theta}\cdot\boldsymbol{\kappa}},\,\, i^2=-1. 
	\end{equation} 
\end{definition} 
\begin{definition}
	The LFA smoothing factor for the relaxation error operator $S_h$ in \eqref{eq:relaxation-error-operator} is defined as
	\begin{equation}\label{eq:loc-mu-Sh}
		\mu_{\rm loc}(S_h) = \max_{\boldsymbol{\theta} \in T^{\rm H_q}}\{\rho(\widetilde{S}_h(\omega,\boldsymbol{\theta}))\},
	\end{equation} 
	where $\rho(\widetilde{S}_h(\omega,\boldsymbol{\theta}))$ stands for the spectral radius of matrix symbol $\widetilde{S}_h(\omega,\boldsymbol{\theta})$.
	Moreover, the LFA optimal smoothing factor for $S_h$  is given by
	\begin{equation}\label{eq:def-opti-mu}
		\mu_{\rm opt}(S_h) = \min_{\omega \in\mathbb{R}} \mu_{\rm loc}(S_h(\omega)).
	\end{equation} 
\end{definition}

We remark that $\widetilde{S}_h$ in \eqref{eq:loc-mu-Sh} is a $2\times 2$ matrix, since $M_h$ and $A_h$ are $2\times 2$ block matrices and the symbol of each block is a scalar.  We are interested in analytically finding the  optimal solution of \eqref{eq:def-opti-mu}:  identify the optimal algorithmic parameter $\omega$ and the corresponding optimal smoothing factor. 

The stencil presentation of $L_h$ is
\begin{equation}
	L_h = \frac{1}{h^2}\begin{bmatrix}
		& -1 &   \\
		-1 & 4& -1 \\
		& -1 &    
	\end{bmatrix}.
\end{equation} 
Using \eqref{eq:symbol-form},  the symbol of $L_h$ reads
\begin{equation}\label{eq:symbol-Lh}
	\widetilde{L}_h=\frac{1}{h^2}(4-2\cos \theta_1 -2\cos \theta_2)=:a.
\end{equation}
It follows that
\begin{equation}
	\widetilde{A}_h = \begin{bmatrix}
		a& -1/\alpha     \\
		1 & a
	\end{bmatrix}.
\end{equation}

\subsection{The CJR scheme} 

From \eqref{eq:CJ-relaxation}, the symbol of ${B}_{J}$ is 
\begin{equation}\label{eq:collective-Jacobi-symbol}
	\widetilde{B}_{J}= \begin{bmatrix}
		a_1& -1/\alpha\\
		1  & a_1
	\end{bmatrix},\quad \text{with}\,\, a_1=\frac{4}{h^2}.
\end{equation} 
To find the optimal smoothing factor \eqref{eq:def-opti-mu}  for the CJR scheme, we have to find the eigenvalues of $\widetilde{B}_{J}^{-1}\widetilde{A}_h$ . We first compute the determinant of $\widetilde{A}_h-\lambda \widetilde{B}_{J}$:
\begin{equation*}
	| \widetilde{A}_h-\lambda\widetilde{B}_{J}| =
	\begin{vmatrix}
		a - \lambda a_1      & -1/\alpha (1-\lambda)  \\
		1-\lambda     &   a-\lambda a_1
	\end{vmatrix} = (a - \lambda a_1 )^2+ 1/\alpha(1-\lambda)^2. 
\end{equation*}
Then, it can be shown that the two eigenvalues of   $\widetilde{B}_{J}^{-1}\widetilde{A}_h$  are 
\begin{equation}
	\lambda_{1,2}=\frac{\frac{a}{a_1}  \pm i \frac{1}{\sqrt{\alpha} a_1}}{1\pm i \frac{1}{\sqrt{\alpha} a_1}}.
\end{equation}
Let $\tau=\frac{a}{a_1}$ and $\gamma =\frac{1}{\sqrt{\alpha} a_1}$. Then,
\begin{equation*}
	|\lambda(I-\omega \widetilde{B}_{J}^{-1}\widetilde{A}_h)|=|1-\omega \lambda_{1,2}|
	= \left |1-\omega \frac{\tau + i \gamma}{1+ i \gamma}\right|= \left| \frac{(1-\omega \tau)+(1-\omega)\gamma i}{1+i \gamma }\right|.
\end{equation*}

Since  $a_1$ is a constant independent of $\boldsymbol{\theta}$, we now simplify  the optimal smoothing factor \eqref{eq:def-opti-mu} for the CJR with $q=2,3,4$.
\begin{theorem}\label{thm:CJ-simplify-mu}
	For the CJR scheme, let  $\tau_1, \tau_2$ be the minimum and maximum of $\tau=\frac{a}{a_1}$ with $\boldsymbol{\theta} \in T^{\rm H_q}$, respectively, and $\tau_0=\frac{2}{\tau_1+\tau_2}$.  Assume that $0<\tau_1<1<\tau_2$ and $\tau_0\leq 1$. Then 
	\begin{equation}\label{eq:general-minmax-form}
		\mu_{\rm opt,CJR}
		=\min_{\omega \in [\tau_0,\infty)} \left\{\frac{\sqrt{ |1-\tau_2 \omega |^2+|(1-\omega)\gamma|^2} }{|1+i \gamma| } \right \},
	\end{equation}
	where $\gamma = \frac{1}{\sqrt{\alpha} a_1}=\frac{h^2}{4 \sqrt{\alpha} }$. 
\end{theorem}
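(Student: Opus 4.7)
The plan is to start from the eigenvalue formula derived immediately above the statement, namely
$$|\lambda(I-\omega\widetilde{B}_J^{-1}\widetilde{A}_h)|^2 = \frac{(1-\omega\tau)^2+(1-\omega)^2\gamma^2}{1+\gamma^2},$$
where $\tau=a/a_1$ depends on $\boldsymbol\theta\in T^{H_q}$ while $\gamma=1/(\sqrt{\alpha}\,a_1)$ does not. By definition the smoothing factor $\mu_{\rm loc}(S_h)$ is the supremum of this quantity over the high-frequency set, and $\mu_{\rm opt,CJR}$ is the infimum over $\omega\in\mathbb{R}$ of that supremum.

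First I would handle the inner supremum over $\boldsymbol\theta$ for fixed $\omega$. Since $(1-\omega\tau)^2$ is convex in $\tau$ and the image $\{\tau(\boldsymbol\theta):\boldsymbol\theta\in T^{H_q}\}$ has $\tau_1$ and $\tau_2$ as its infimum and supremum, the supremum over $\boldsymbol\theta$ equals $\max\{(1-\omega\tau_1)^2,(1-\omega\tau_2)^2\}$. Solving $|1-\omega\tau_1|=|1-\omega\tau_2|$ under $0<\tau_1<1<\tau_2$ isolates the unique crossover $\omega=\tau_0=2/(\tau_1+\tau_2)$: for $\omega\ge\tau_0$ the supremum is attained at $\tau=\tau_2$, giving exactly the expression inside the braces of \eqref{eq:general-minmax-form}, while for $\omega\le\tau_0$ it is attained at $\tau=\tau_1$.

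Next I would show that the outer minimization may be restricted to $[\tau_0,\infty)$ without changing the optimal value. Let $g_1(\omega)^2 (1+\gamma^2) = (1-\omega\tau_1)^2+(1-\omega)^2\gamma^2$ denote the squared left-branch objective. A direct differentiation yields the unique critical point $\omega_1^*=(\tau_1+\gamma^2)/(\tau_1^2+\gamma^2)$, and a short algebraic manipulation shows that $\omega_1^*-\tau_0$ has the same sign as $\tau_1(\tau_2-\tau_1)+(\tau_1+\tau_2-2)\gamma^2$. The first summand is strictly positive by $\tau_1<\tau_2$, and the second is nonnegative precisely because the hypothesis $\tau_0\le 1$ forces $\tau_1+\tau_2\ge 2$. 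Hence $\omega_1^*\ge\tau_0$, so $g_1$ is strictly decreasing on $(-\infty,\tau_0]$. Together with $g_1(\tau_0)=g_2(\tau_0)$ (continuity of the piecewise maximum), this means any candidate minimizer in the left branch is dominated by the value at $\omega=\tau_0$, which is already the left endpoint of the right-branch optimization; combining with the previous paragraph delivers \eqref{eq:general-minmax-form}.

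The main obstacle will be this monotonicity check: the position of $\omega_1^*$ depends simultaneously on $\tau_1$ and $\gamma$, hence implicitly on both $\alpha$ and $h$, and one might worry about a small-$\gamma$ regime in which $\omega_1^*$ slides to the left of $\tau_0$ and the left-branch objective dips below $g_1(\tau_0)$ before reaching the crossover. The hypothesis $\tau_0\le 1$ enters exactly in the right place to rule out this scenario uniformly in $\gamma$, which is why it cannot be dropped; the remaining pieces are essentially convexity in $\tau$ together with computing a single crossover point, both of which are routine.
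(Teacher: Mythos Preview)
Your argument is correct and follows the same route as the paper's proof: reduce the inner maximum over $\boldsymbol\theta$ to the two endpoints $\tau_1,\tau_2$, identify the crossover $\omega=\tau_0$, and restrict the outer minimization to $[\tau_0,\infty)$ by showing the $\tau_1$-branch objective is decreasing on $(-\infty,\tau_0]$. You are in fact more careful than the paper, which simply asserts ``if $\omega<\tau_0$, then $\zeta_2(\omega)>\zeta_2(\tau_0)$'' without justification; your explicit location of the critical point $\omega_1^*=(\tau_1+\gamma^2)/(\tau_1^2+\gamma^2)$ and the verification that $\omega_1^*\ge\tau_0$ (using $\tau_1(\tau_2-\tau_1)>0$ together with $\tau_1+\tau_2\ge 2$ from the hypothesis $\tau_0\le 1$) supplies exactly the missing monotonicity check.
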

\begin{proof}
	\begin{align} 
		&\mu_{\rm loc}(S_h(\omega))=\max_{\boldsymbol{\theta}\in T^{\rm H_q}}|\lambda(I-\omega \widetilde{B}_{J}^{-1}\widetilde{A}_h)|\nonumber\\
		&=\max \left\{\zeta_1=\frac{\sqrt{|1-\tau_2 \omega |^2+|(1-\omega)\gamma|^2} }{|1+i \gamma| },\,\, \zeta_2=\frac{\sqrt{|1-\tau_1\omega |^2+|(1-\omega)\gamma |^2} }{|1+i \gamma |}\right \}\nonumber.
	\end{align}  
	Note that when $\omega=\tau_0$,     $\zeta_1(\tau_0)=\zeta_2(\tau_0)$. If $\omega<\tau_0$, then $\zeta_2(\omega)>\zeta_2(\tau_0)$, and $\mu_{\rm loc }(S(\omega))>\mu_{\rm loc }(S(\tau_0))$. Thus, to minimize $\mu_{\rm loc }(S(\omega))$ over $\omega \in \mathbb{R}$, we only need to consider $\omega\in[\tau_0,\infty)$. 
	For $\omega \geq \tau_0$, $\zeta_1>\zeta_2$. Thus,  
	\begin{equation*}
		\mu_{\rm opt, CJR}=\min_{\omega \in\mathbb{R}} \mu_{\rm loc}(S_h(\omega))
		=\min_{\omega \in [\tau_0,\infty)} \left\{\frac{\sqrt{ |1-\tau_2 \omega |^2+|(1-\omega)\gamma|^2} }{|1+i \gamma| } \right \},
	\end{equation*}
	which completes the proof.
\end{proof}

From Theorem \ref{thm:CJ-simplify-mu}, it is important to estimate the range of $\tau =\frac{a}{a_1}$, i.e., $\tau_1, \tau_2$, to solve  \eqref{eq:general-minmax-form}.  Since $\tau = \widetilde{D}_h^{-1} \widetilde{L}_h$, we first  investigate  the optimal smoothing factor of weighted Jacobi relaxation scheme for the scalar Laplacian for $q=2, 3, 4$. The standard coarsening case with $q=2$ is well-known as the following lemma.
\begin{lemma}\label{lemma:Jacobi-H2-opt-mu} 
	For  standard  coarsening ($q=2$),   $\tau=\frac{a}{a_1} \in \left [\frac{1}{2}, 2\right]$ for $\boldsymbol{\theta} \in T^{\rm H_2}$.   Moreover, the optimal smoothing factor of weighted Jacobi relaxation scheme for the scalar Laplacian is  achieved at $\omega =\frac{4}{5}$, given by
	\begin{equation}
		\min_{\omega} \max_{\boldsymbol{\theta} \in T^{\rm H_2}} \{|\lambda(1-\omega \widetilde{D}_h^{-1} \widetilde{L}_h)|\} 
		=\min_{\omega} \max  \left\{|1-\omega /2|, |1-2\omega| \right\} =\frac{3}{5}.
	\end{equation} 
\end{lemma}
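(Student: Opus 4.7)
The plan is to treat the two assertions separately: first identify the range of $\tau=a/a_1$ on $T^{H_2}$, then solve the resulting scalar minimax problem. Both parts are short, since this is essentially the textbook weighted Jacobi / Laplacian smoothing analysis specialized to the high-frequency set for standard coarsening.

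\textbf{Step 1: range of $\tau$.} Substituting the stencils gives
\begin{equation*}
\tau \;=\; \frac{a}{a_1} \;=\; \frac{4-2\cos\theta_1-2\cos\theta_2}{4} \;=\; 1-\tfrac{1}{2}(\cos\theta_1+\cos\theta_2).
\end{equation*}
So I just need the extrema of $c(\boldsymbol{\theta}):=\cos\theta_1+\cos\theta_2$ on
$T^{H_2}=(-\pi/2,3\pi/2]^2\setminus(-\pi/2,\pi/2]^2$. On this set, at least one component lies in $(\pi/2,3\pi/2]$, on which $\cos\theta_i\le 0$. Splitting into cases (one vs.\ both components in the high range), I would note: if both are high, $c\le 0$; if only one is high, the other is in $(-\pi/2,\pi/2]$ giving $c\le 1+0=1$, attained at $(\theta_1,\theta_2)=(0,3\pi/2)$. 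The minimum $c=-2$ is attained at $(\pi,\pi)$. Therefore $c\in[-2,1]$ and hence $\tau\in[1/2,2]$, so $\tau_1=1/2$ and $\tau_2=2$.

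\textbf{Step 2: the minimax.} The symbol of the weighted Jacobi error operator for the scalar Laplacian is the scalar $1-\omega\tau$. For fixed $\omega$, the map $\tau\mapsto|1-\omega\tau|$ is V-shaped, so its maximum over $[1/2,2]$ is attained at one of the two endpoints, giving
\begin{equation*}
\max_{\boldsymbol{\theta}\in T^{H_2}}|1-\omega\tau|=\max\{|1-\omega/2|,\,|1-2\omega|\}.
\end{equation*}
To minimize this over $\omega$, observe that $|1-\omega/2|$ is decreasing for $\omega\le 2$ while $|1-2\omega|$ is decreasing only for $\omega\le 1/2$ and increasing thereafter. The minimum therefore occurs where the two branches coincide with opposite signs, i.e.\ $1-\omega/2=2\omega-1$, giving $\omega=4/5$ and common value $3/5$. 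A sign check confirms this is indeed the minimum, completing the proof.

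\textbf{Main obstacle.} Nothing here is technically hard; the only place one must be careful is Step 1, specifically verifying that taking one argument in the low band and the other in the high band (rather than both in the high band) is what achieves the maximum of $c$. A clean way to dispatch this is simply the case analysis above together with the observation that $\cos$ is nonpositive on the high band, so the maximum must have exactly one component in the low band. Once $[\tau_1,\tau_2]=[1/2,2]$ is established, the minimax step is a one-line equioscillation argument.
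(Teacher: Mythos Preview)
Your proof is correct. The paper does not actually prove this lemma---it states it as ``well-known'' and moves on---so there is nothing to compare against directly; that said, your approach (range of $\tau$ via a case analysis on the high-frequency region, then a two-term equioscillation minimax) is exactly the template the paper uses in its proofs of the analogous $q=3$ and $q=4$ results (Theorems~\ref{thm:Jacobi-H3-opt-mu} and~\ref{thm:Jacobi-H4-opt-mu}).
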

We now give the range of  $(\cos\theta_1,\cos\theta_2)$  with $\boldsymbol{\theta}\in T^{\rm H_q}$ and  $q=3, 4$,  which will be very useful for conducting our optimal smoothing analysis for $q=3, 4$. 

For $\boldsymbol{\theta}\in T^{\rm H_3}$,  it can be shown  that 
\begin{equation}\label{eq:H3-range}
	(\cos\theta_1,\cos\theta_2)  \in \mathcal{D}^{(3)}=[-1,1]\times [-1,1/2] \bigcup [-1, 1/2]\times [1/2,1].
\end{equation}

For $\boldsymbol{\theta}\in T^{\rm H_4}$,  it can be shown that 
\begin{equation}\label{eq:H4-range}
	(\cos\theta_1,\cos\theta_2) \in \mathcal{D}^{(4)}=[-1,\sqrt{2}/2]\times [\sqrt{2}/2, 1]\bigcup [-1,1] \times [-1,\sqrt{2}/2].  
\end{equation}

\begin{theorem}\label{thm:Jacobi-H3-opt-mu} 
	For   coarsening by three ($q=3$),  $\tau=\frac{a}{a_1} \in \left [\frac{1}{4}, 2\right]$ for $\boldsymbol{\theta} \in T^{\rm H_3}$.   Moreover, the optimal smoothing factor of weighted Jacobi relaxation scheme for the scalar Laplacian is given by
	\begin{equation}
		\min_{\omega} \max_{\boldsymbol{\theta} \in T^{\rm H_3}} \{|\lambda(1-\omega \widetilde{D}_h^{-1}\widetilde{L}_h)|\} 
		=\min_{\omega} \max  \left\{|1-\omega /4|, |1-2\omega| \right\} =\frac{7}{9}\approx 0.778,
	\end{equation} 
	provided that $\omega =\frac{8}{9}$.
\end{theorem}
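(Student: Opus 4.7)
The plan is to mimic the structure of Lemma \ref{lemma:Jacobi-H2-opt-mu} but adapted to the $q=3$ high-frequency region $T^{\rm H_3}$. Since $\widetilde{D}_h = a_1 = 4/h^2$ is constant, the eigenvalue satisfies $|\lambda(I-\omega\widetilde{D}_h^{-1}\widetilde{L}_h)| = |1-\omega\tau|$, so the whole task reduces to (i) determining the range $[\tau_1,\tau_2]$ of $\tau=a/a_1$ on $T^{\rm H_3}$, and (ii) solving the one-dimensional minimax problem $\min_\omega \max\{|1-\omega\tau_1|,|1-\omega\tau_2|\}$.

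For step (i), I would rewrite
\begin{equation*}
  \tau = \frac{a}{a_1} = \frac{4-2\cos\theta_1-2\cos\theta_2}{4} = 1 - \frac{\cos\theta_1+\cos\theta_2}{2},
\end{equation*}
so the extrema of $\tau$ correspond to the extrema of $\cos\theta_1+\cos\theta_2$ over the set $\mathcal{D}^{(3)}$ described in \eqref{eq:H3-range}. The minimum of that sum is clearly $-2$, attained at $(\cos\theta_1,\cos\theta_2)=(-1,-1)$, which lies in $\mathcal{D}^{(3)}$ and yields $\tau_2=2$. For the maximum, since $\mathcal{D}^{(3)}$ excludes the corner $(1,1)$ but includes the two corners $(1,1/2)$ and $(1/2,1)$, the supremum of $\cos\theta_1+\cos\theta_2$ on $\mathcal{D}^{(3)}$ is $3/2$, giving $\tau_1=1/4$. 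This shows $\tau\in[1/4,2]$.

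For step (ii), since $\tau$ is continuous in $\boldsymbol{\theta}$ and achieves every value in $[\tau_1,\tau_2]$, the inner maximum over $T^{\rm H_3}$ equals $\max_{\tau\in[\tau_1,\tau_2]}|1-\omega\tau|$, and for $\omega>0$ the maximum of a linear function of $\tau$ is attained at an endpoint, giving $\max\{|1-\omega/4|,|1-2\omega|\}$. The optimal $\omega$ is found by the standard equioscillation argument: in the regime where $1-\omega/4\ge 0$ and $2\omega-1\ge 0$, setting $1-\omega/4 = 2\omega-1$ gives $\omega = 8/9$, at which both quantities equal $7/9$. A short check that this critical point is indeed the global minimum (outside this regime one of the two terms exceeds $7/9$) completes the argument.

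I do not expect any serious obstacle; the main care point is step (i), where one must verify that the maximum of $\cos\theta_1+\cos\theta_2$ is taken at the non-obvious boundary corners $(1,1/2)$ and $(1/2,1)$ rather than at $(1,1)$, since the latter belongs to $T^{\rm L_3}$ and is excluded. Once the endpoints $\tau_1=1/4$ and $\tau_2=2$ are correctly identified, the minimax step is entirely parallel to the proof of Lemma \ref{lemma:Jacobi-H2-opt-mu}.
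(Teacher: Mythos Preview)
Your proposal is correct and follows essentially the same approach as the paper: both reduce the problem to finding the range of $\tau=\frac{1}{2}(2-\cos\theta_1-\cos\theta_2)$ over $\mathcal{D}^{(3)}$ and then solve the resulting one-dimensional equioscillation problem. The only cosmetic difference is that the paper carries out a more explicit boundary-by-boundary check of the (affine) function $\psi(\eta_1,\eta_2)=\frac{1}{2}(2-\eta_1-\eta_2)$ on $\partial\mathcal{D}^{(3)}$, whereas you go directly to the corner points; since $\psi$ is linear in $(\eta_1,\eta_2)$, your shortcut is fully justified.
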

\begin{proof}
	Let $\eta_1=\cos \theta_1$, $\eta_2=\cos \theta_2$.   Note that $\widetilde{D}_h^{-1}\widetilde{L}_h=\frac{1}{2}(2-\cos\theta_1-\cos\theta_2)$. Let $\psi(\eta_1,\eta_2)=\frac{1}{2}(2-\eta_1-\eta_2)$. If 
	$\psi'_{\eta_1}(\eta_1,\eta_2)=\psi'_{\eta_2}(\eta_1,\eta_2)=0$, we have $\eta_1=\eta_2=0$. Thus, $\psi(0,0)=1$ is a possible extreme value of $\psi(\eta_1,\eta_2)$.

	Next, we look for the extreme values of $\psi(\eta_1,\eta_2)$ at the boundary of $\mathcal{D}^{(3)}$, see \eqref{eq:H3-range}. 
	Due to the symmetry of $\psi(\eta_1,\eta_2)$, that is $\psi(\eta_1,\eta_2)=\psi(\eta_2,\eta_1)$, we only need to find the extreme values of $\psi(\eta_1,\eta_2) $ at $\partial \mathcal{D}_1, \partial \mathcal{D}_2$  and $\partial \mathcal{D}_2$, where
	\begin{equation*}
		\partial \mathcal{D}_1 =\{-1 \}\times [-1,1], \,\partial \mathcal{D}_2= \{1\}\times \left[-1,1/2\right], \, \partial  \mathcal{D}_3 =\left [1/2,1\right]\times \{1/2\}.
	\end{equation*} 
	
	\begin{itemize}
		
		\item For $ (\eta_1,\eta_2)\in \partial \mathcal{D}_1$, $\psi(\eta_1,\eta_2) =\psi(-1,\eta_2) =\frac{1}{2}(3-\eta_2).$  Since  $\eta_2 \in[-1,1]$, the extreme values of  $\psi(-1,\eta_2)$  are 
		\begin{equation*}
			\psi(-1,\eta_2) _{\rm max} =\psi(-1,-1)  =2, \quad 
			\psi(-1,\eta_2) _{\rm min} =\psi(-1,1)  = 1. 
		\end{equation*}
		\item  For $ (\eta_1,\eta_2)\in \partial \mathcal{D}_2$,  $\psi(\eta_1,\eta_2) =\psi(1,\eta_2) =\frac{1}{2}(1-\eta_2).$  For  $\eta_2 \in[-1,1/2]$, we have  
		\begin{equation*}
			\psi(1,\eta_2) _{\rm max} =\psi(1,-1)  =1, \quad 
			\psi(1,\eta_2) _{\rm min} =\psi(1,1/2)  = \frac{1}{4}. 
		\end{equation*}
		\item  For $ (\eta_1,\eta_2)\in \partial \mathcal{D}_3$, $\psi(\eta_1,\eta_2) =\psi( \eta_1, 1/2) =\frac{1}{4}(3-2\eta_1).$
		For  $\eta_1 \in[1/2,1]$, we have  
		\begin{equation*}
			\psi(\eta_1,1/2) _{\rm max} =\psi(1/2,1/2)  =\frac{1}{2}, \quad 
			\psi(\eta_1,1/2) _{\rm min} =\psi(1,1/2)  = \frac{1}{4}. 
		\end{equation*}
	\end{itemize}
	Based on the above discussions,  the extreme values of $\psi(\eta_1,\eta_2)$ over $\mathcal{D}$ are  
	\begin{equation*}
		\psi(\eta_1,\eta_2) _{\rm max} =\psi(-1,-1)  =2, \quad
		\psi(\eta_1,\eta_2) _{\rm min} =\psi(1,1/2)  =\psi(1/2,1) = \frac{1}{4}.
	\end{equation*}
	It follows that 
	\begin{equation*}
		\mu_{\rm loc}(S_J) =  \max_{\boldsymbol{\theta} \in T^{\rm H_3}} \{|1-\omega \widetilde{D}_h^{-1} \widetilde{L}_h|\}
		=\max \left\{ |1-2\omega|, |1-\omega/4| \right\}. 
	\end{equation*}
	To minimize $\mu_{\rm loc}(S_J)$ over $\omega$, it requires that 
	$
	|1-2\omega|= |1-\omega/4|, 
	$
	which gives that $\omega =\frac{2}{2+1/4}=\frac{8}{9}$. Furthermore, $\mu_{\rm opt}=1-\omega/4=\frac{7}{9}$.
\end{proof}

\begin{theorem}\label{thm:Jacobi-H4-opt-mu} 
	For  coarsening by four ($q=4$),     $\tau=\frac{a}{a_1} \in \left [\frac{2-\sqrt{2}}{4}, 2\right]$ for $\boldsymbol{\theta} \in T^{\rm H_4}$.    Moreover, the optimal smoothing factor of weighted Jacobi relaxation scheme for the scalar Laplacian is given by
	\eqn
		\min_{\omega} \max_{\boldsymbol{\theta} \in T^{\rm H_4}} \{|\lambda(1-\omega \widetilde{D}_h^{-1}\widetilde{L}_h)|\} 
		&=\min_{\omega} \max \left\{|1- (2-\sqrt{2}) \omega/4|, |1-2\omega | \right\} \\
		 &=\frac{6+\sqrt{2}}{10-\sqrt{2}}\approx 0.864,
	\een
	provided that $\omega =\frac{8}{10-\sqrt{2}}\approx 0.932$.
\end{theorem}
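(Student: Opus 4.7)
The plan is to follow the same template used for the $q=3$ case in Theorem \ref{thm:Jacobi-H3-opt-mu}, namely: (i) express the symbol $\widetilde{D}_h^{-1}\widetilde{L}_h$ as a function $\psi(\eta_1,\eta_2)=\frac{1}{2}(2-\eta_1-\eta_2)$ with $(\eta_1,\eta_2)=(\cos\theta_1,\cos\theta_2)$; (ii) compute $\tau_1=\min\psi$ and $\tau_2=\max\psi$ over the admissible region $\mathcal{D}^{(4)}$ given in \eqref{eq:H4-range}; and (iii) solve the resulting one-dimensional minimax problem $\min_\omega \max\{|1-\tau_1\omega|,|1-\tau_2\omega|\}$ in closed form.

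For step (ii), I would first note that $\nabla\psi=(-1/2,-1/2)$ never vanishes, so no interior extremum exists and all extrema lie on $\partial\mathcal{D}^{(4)}$. The region $\mathcal{D}^{(4)}$ is the union of the two rectangles $R_1=[-1,\sqrt{2}/2]\times[\sqrt{2}/2,1]$ and $R_2=[-1,1]\times[-1,\sqrt{2}/2]$, and its boundary decomposes into six segments (bottom, lower-right, the inward horizontal jog at height $\sqrt{2}/2$ for $\eta_1\in[\sqrt{2}/2,1]$, upper-right at $\eta_1=\sqrt{2}/2$ for $\eta_2\in[\sqrt{2}/2,1]$, top, and left). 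Since $\psi$ and $\mathcal{D}^{(4)}$ are both symmetric under $\eta_1\leftrightarrow\eta_2$, it suffices to check three non-equivalent pieces, say the left edge $\{-1\}\times[-1,1]$, the lower-right edge $\{1\}\times[-1,\sqrt{2}/2]$, and the jog $[\sqrt{2}/2,1]\times\{\sqrt{2}/2\}$. On each, $\psi$ is affine, so extrema occur at corners. A direct check then shows that the maximum is $\psi(-1,-1)=2$ and the minimum is $\psi(1,\sqrt{2}/2)=\psi(\sqrt{2}/2,1)=(2-\sqrt{2})/4$, giving $\tau_1=(2-\sqrt{2})/4$ and $\tau_2=2$.

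For step (iii), equioscillation $1-\tau_1\omega = \tau_2\omega-1$ yields $\omega(\tau_1+\tau_2)=2$, so $\omega = 8/(10-\sqrt{2})$, and substituting back gives $\mu_{\rm opt}=2\omega-1=(6+\sqrt{2})/(10-\sqrt{2})$, matching the claim. The sign choices $|1-\tau_1\omega|=1-\tau_1\omega$ and $|1-\tau_2\omega|=\tau_2\omega-1$ at this $\omega$ are easily verified since $\omega\tau_1<1<\omega\tau_2$.

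The main obstacle is the boundary bookkeeping for $\mathcal{D}^{(4)}$: unlike the $q=3$ region, $\mathcal{D}^{(4)}$ has a re-entrant corner at $(\sqrt{2}/2,\sqrt{2}/2)$, so one must include the short horizontal/vertical jog segments when searching for extrema and cannot simply restrict to the four outer edges of a rectangle. Once the symmetry reduction is in place and the six segments are enumerated, the remaining work is routine evaluation of an affine function at corner points, and the closed-form minimax is immediate.
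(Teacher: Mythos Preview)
Your proposal is correct and follows essentially the same approach as the paper: both reduce, via the symmetry $\psi(\eta_1,\eta_2)=\psi(\eta_2,\eta_1)$, to checking the same three boundary pieces $\{-1\}\times[-1,1]$, $\{1\}\times[-1,\sqrt{2}/2]$, and $[\sqrt{2}/2,1]\times\{\sqrt{2}/2\}$, find the extrema $\tau_1=(2-\sqrt{2})/4$, $\tau_2=2$, and then solve the equioscillation condition to obtain $\omega=8/(10-\sqrt{2})$ and $\mu_{\rm opt}=2\omega-1$. The only difference is cosmetic: the paper imports the maximum $\tau_2=2$ directly from the $q=3$ proof rather than rederiving it, and is terser about the boundary enumeration, whereas you spell out the gradient argument and the re-entrant-corner bookkeeping explicitly.
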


\begin{proof}
	From the proof of Theorem \ref{thm:Jacobi-H3-opt-mu}, we know that the maximum of $\widetilde{D}_h^{-1} \widetilde{L}_h$ is 2. Next, we compute the  minimum of $ \widetilde{D}_h^{-1}\widetilde{L}_h=\frac{1}{2}(2-\eta_1-\eta_2)$   at the boundary of 
	$\mathcal{D}^{(4)}$, see \eqref{eq:H4-range}. Since $\psi(\eta_1,\eta_2) =\psi(\eta_2,\eta_1) $,  we only need to consider the following three boundaries of $\mathcal{D}^{(4)}$:  
	\begin{equation*}
		\partial \mathcal{D}_1 =\{-1\} \times [-1,1], \, \partial \mathcal{D}_2=\{1\} \times \left[-1,\sqrt{2}/2\right], \,
		\partial \mathcal{D}_3 =\left [\sqrt{2}/2,1\right]\times \left\{\sqrt{2}/2\right\}. 
	\end{equation*} 
	Following the same proof of Theorem \ref{thm:Jacobi-H3-opt-mu}, we can obtain the minimum of $\psi(\eta_1,\eta_2)$ is
	\begin{equation*}
		\psi(\eta_1,\eta_2)_{\rm min}=\psi(1, \sqrt{2}/2)=\frac{2-\sqrt{2}}{4}.
	\end{equation*} 
	To minimize $\mu_{\rm loc}(S_J)$ over $\omega$, it requires that $\omega =\frac{2}{2+(2-\sqrt{2})/4}=\frac{8}{10-\sqrt{2}}$. Furthermore, there obviously holds
	$\mu_{\rm opt}= 2\omega -1=\frac{6+\sqrt{2}}{10-\sqrt{2}}\approx 0.864,$
	which completes the proof.
\end{proof}
\begin{theorem}\label{thm:CJR-optimal}
	Let $\gamma =\frac{h^2}{4\sqrt{\alpha} }$ and $\omega_0= \frac{2+\gamma^2}{4+\gamma^2}$. For the CJR scheme, we have the following conclusions:
	\begin{enumerate}
		\item[(i)] For standard coarsening ($q=2$),   the optimal $\omega$ and  smoothing  factor   are  
		\begin{itemize}
			
			\item if $\alpha <\frac{h^4}{96}$ (i.e. $\gamma >\sqrt{6}$),     $\omega=\omega_0$ and   
			\begin{equation}\label{eq:CJR-q2-opt-case1}
				\mu_{\rm opt, CJR} = \sqrt{\frac{\gamma^2}{(4+\gamma^2)(1+\gamma^2)}  }\leq \sqrt{\frac{3}{35}}\approx 0.293.
			\end{equation} 
			\item if $\alpha \geq \frac{h^4}{96}$ (i.e.  $\gamma \leq \sqrt{6}$), $\omega=4/5$ and  
			\begin{equation}\label{eq:CJR-q2-opt}
				0.293\approx \sqrt{\frac{3}{35}}  \leq \mu_{\rm opt, CJR}=\frac{1}{5} \sqrt{ \frac{9+\gamma^2}{1+\gamma^2} } <\frac{3}{5}.
			\end{equation}  
		\end{itemize}
		
		\item[(ii)] For  coarsening by three ($q=3$),    the optimal $\omega$ and  smoothing  factor  are   
		\begin{itemize}
			
			\item if $\alpha <\frac{h^4}{224}$ (i.e. $\gamma >\sqrt{14}$),  $\omega=\omega_0$ and 
			\begin{equation}\label{eq:CJR-q3-opt-case1}
				\mu_{\rm opt, CJR} = \sqrt{\frac{\gamma^2}{(4+\gamma^2)(1+\gamma^2)}  }\leq \sqrt{\frac{7}{135}}\approx 0.228.
			\end{equation}
			\item if $\alpha \geq \frac{h^4}{224}$ (i.e. $\gamma \leq \sqrt{14}$),  $\omega=8/9$ and 
			\begin{equation}\label{eq:CJR-q3-opt}
				0.228\approx \sqrt{\frac{7}{135}}  \leq \mu_{\rm opt, CJR}=\frac{1}{9} \sqrt{ \frac{49+\gamma^2}{1+\gamma^2} } <\frac{7}{9}\approx 0.778.
			\end{equation}  
		\end{itemize}
		
		\item[(iii)]  For  coarsening by four ($q=4$),   the optimal $\omega$  and   smoothing  factor  are  
		\begin{itemize}
			
			\item if $\alpha <\frac{h^4(2-\sqrt{2})}{32(6+\sqrt{2})}$ (i.e. $\gamma >\sqrt{ (12+2\sqrt{2})/(2-\sqrt{2})}\approx \sqrt{25.3}$),  $\omega=\omega_0$ and
			\begin{equation}\label{eq:CJR-q4-opt-case1}
				\mu_{\rm opt, CJR} = \sqrt{\frac{\gamma^2}{(4+\gamma^2)(1+\gamma^2)}  }\leq 0.181.
			\end{equation} 
			\item if $\alpha \geq \frac{h^4(2-\sqrt{2})}{32(6+\sqrt{2})}$ (i.e.   $\gamma \leq \sqrt{ (12+2\sqrt{2})/(2-\sqrt{2})}\approx \sqrt{25.3}$),   $\omega=\frac{8}{10-\sqrt{2}}$ and
			\begin{equation}\label{eq:CJR-q4-opt}
				0.181 \leq \mu_{\rm opt, CJR}
				=\sqrt{\frac{ (4+\gamma^2)\omega^2-(4+2\gamma^2)\omega+1+\gamma^2}{1+\gamma^2}}< 0.864.
			\end{equation}  
		\end{itemize}
	\end{enumerate}
\end{theorem}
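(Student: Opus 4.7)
The plan is to reduce the theorem to a single one-variable constrained minimization, which, thanks to Theorem \ref{thm:CJ-simplify-mu} and Lemma \ref{lemma:Jacobi-H2-opt-mu}/Theorems \ref{thm:Jacobi-H3-opt-mu}--\ref{thm:Jacobi-H4-opt-mu}, all three parts $q=2,3,4$ handle simultaneously. Indeed, in every case $\tau_2=\max\tau=2$, and only $\tau_1$ (and hence $\tau_0=2/(\tau_1+\tau_2)$) differs: $\tau_0=4/5$, $8/9$, $8/(10-\sqrt{2})$ for $q=2,3,4$ respectively. So the task is to minimize
$$F(\omega)\;=\;\frac{(1-2\omega)^2+\gamma^2(1-\omega)^2}{1+\gamma^2}$$
over $\omega\in[\tau_0,\infty)$.

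First I would differentiate: $F'(\omega)=0$ gives the unconstrained minimizer
$$\omega^{\star}\;=\;\frac{2+\gamma^2}{4+\gamma^2}\;=\;\omega_0.$$
Since $F$ is a strictly convex quadratic in $\omega$, the constrained optimum is $\omega_0$ when $\omega_0\ge\tau_0$, and otherwise the boundary point $\tau_0$. Comparing $\omega_0$ against each $\tau_0$ gives the threshold on $\gamma$: $\omega_0\ge\tau_0$ amounts to a linear inequality in $\gamma^2$, yielding $\gamma^2\ge 6$ for $q=2$, $\gamma^2\ge 14$ for $q=3$, and $\gamma^2\ge(12+2\sqrt{2})/(2-\sqrt{2})$ for $q=4$. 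Translating back via $\gamma^2=h^4/(16\alpha)$ gives exactly the three stated dichotomies on $\alpha$ versus $h^4$.

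Next I would evaluate $F$ in each branch. In the \emph{small-$\alpha$} branch ($\omega=\omega_0$), substituting $1-2\omega_0=-\gamma^2/(4+\gamma^2)$ and $1-\omega_0=2/(4+\gamma^2)$ telescopes the numerator to $\gamma^2(4+\gamma^2)/(4+\gamma^2)^2$, giving the common closed form
$$\mu_{\rm opt,CJR}^2=\frac{\gamma^2}{(4+\gamma^2)(1+\gamma^2)},$$
which is increasing in $\gamma^2$; hence the \emph{upper bound} in each case is obtained by plugging in the corresponding threshold value of $\gamma^2$, yielding $\sqrt{3/35}$, $\sqrt{7/135}$, and $\approx 0.181$ respectively. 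In the \emph{large-$\alpha$} branch ($\omega=\tau_0$), one simply substitutes the numerical value of $\tau_0$ and simplifies, giving the stated closed forms, and the monotonicity of each expression in $\gamma$ yields both the lower bound (at the threshold $\gamma^2$, matching the other branch) and the upper bound (as $\gamma\to 0$, recovering the Laplacian-only Jacobi smoothing factor $3/5$, $7/9$, or $(6+\sqrt{2})/(10-\sqrt{2})$ already obtained in Lemma/Theorems \ref{lemma:Jacobi-H2-opt-mu}--\ref{thm:Jacobi-H4-opt-mu}).

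The only real obstacles are bookkeeping: (i) verifying that the hypotheses of Theorem \ref{thm:CJ-simplify-mu} ($0<\tau_1<1<\tau_2$ and $\tau_0\le 1$) are met in all three cases, which is immediate from the ranges of $\tau$; (ii) carrying out the algebraic simplification of $F(\omega_0)$ cleanly so the common closed form emerges; and (iii) checking the monotonicity claims used to extract the two-sided bounds on $\mu_{\rm opt,CJR}$. The $q=4$ case is the messiest only because $\tau_0=8/(10-\sqrt{2})$ does not admit a neat simplification of $F(\tau_0)$, so I would leave it in the displayed quadratic form $\bigl[(4+\gamma^2)\omega^2-(4+2\gamma^2)\omega+(1+\gamma^2)\bigr]/(1+\gamma^2)$ with $\omega=8/(10-\sqrt{2})$, and verify numerically that the limit as $\gamma\to 0$ matches $(6+\sqrt{2})/(10-\sqrt{2})$ and the value at the threshold matches $\approx 0.181$.
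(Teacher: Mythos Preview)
Your proposal follows essentially the same route as the paper: reduce via Theorem~\ref{thm:CJ-simplify-mu} to minimizing the quadratic $\Psi(\omega)=\bigl[(1-2\omega)^2+\gamma^2(1-\omega)^2\bigr]/(1+\gamma^2)$ over $[\tau_0,\infty)$, locate the vertex $\omega_0=(2+\gamma^2)/(4+\gamma^2)$, and split into the two cases $\omega_0>\tau_0$ and $\omega_0\le\tau_0$. The paper phrases the vertex as the ``symmetric axis'' rather than via differentiation, but this is the same computation.

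There is one slip in your monotonicity argument for the small-$\alpha$ branch. You write that $\mu_{\rm opt,CJR}^2=\gamma^2/\bigl((4+\gamma^2)(1+\gamma^2)\bigr)$ is \emph{increasing} in $\gamma^2$, and then conclude the upper bound is attained at the threshold. But if it were increasing on $\gamma^2>\text{threshold}$, the supremum would be at $\gamma^2\to\infty$, not at the threshold. In fact the function equals $1/\bigl(5+\gamma^2+4/\gamma^2\bigr)$ and is \emph{decreasing} for $\gamma^2>2$ (since $\gamma^2+4/\gamma^2$ is increasing there); all three thresholds $6,14,(12+2\sqrt2)/(2-\sqrt2)$ lie above $2$, so the maximum over $\gamma^2\ge\text{threshold}$ is indeed attained at the threshold. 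The paper handles this by the rewriting $1/(5+\gamma^2+4/\gamma^2)$; once you correct the direction of monotonicity your argument goes through unchanged.
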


Before presenting the proofs, we first give some comments on the above results. From \eqref{eq:CJR-q2-opt-case1}, \eqref{eq:CJR-q3-opt-case1} and \eqref{eq:CJR-q4-opt-case1}, we notice that when $\gamma$ is large, coarsening by two, three and four gives the same optimal smoothing factor, which is true for $q$-coarsening ($q>4$). This suggests us we can consider $q$-coarsening which reduces the multigrid levels quickly and uses less CPU time. When $\gamma$ is small (e.g., due to a very small $\alpha$), from \eqref{eq:CJR-q2-opt}, \eqref{eq:CJR-q3-opt} and \eqref{eq:CJR-q4-opt}, it suggests to use standard coarsening since it gives a smaller optimal smoothing factor. A smaller $\alpha$ (while fixing $h$) leads to faster convergence is counter-intuitive considering the system becomes more ill-conditioned.
\begin{proof}
	We first consider standard coarsening. For $\boldsymbol{\theta}\in T^{\rm H_2}$,   Lemma \ref{lemma:Jacobi-H2-opt-mu} gives $\tau_1=1/2, \tau_0=2$ and $\tau_0=4/5$.  From \eqref{eq:general-minmax-form}, we have 
	\begin{equation}
		\mu_{\rm opt, CJR}=\min_{\omega}\max_{\boldsymbol{\theta}\in T^{\rm H_2}}|\lambda(I-\omega \widetilde{B}_{J}^{-1}\widetilde{A}_h)|
		=\min_{\omega \in [4/5,\infty)} \sqrt{\Psi(\omega)},
	\end{equation}
	where
	\begin{equation}\label{eq:Psi-defi}
		\Psi(\omega) =\frac{ |1-2 \omega |^2+|(1-\omega)\gamma|^2}{1+\gamma^2}=\frac{ (4+\gamma^2)\omega^2-(4+2\gamma^2)\omega+1+\gamma^2}{1+\gamma^2}:=\frac{\phi(\omega)}{1+\gamma^2}.
	\end{equation} 
	Since $(4+2\gamma^2)^2-4(4+\gamma^2)(1+\gamma^2)=-4\gamma^2<0$ and the symmetric axis of $\phi(\omega)$ is 
	\begin{equation}\label{eq:w0-form}
		\omega_0= \frac{4+2\gamma^2}{2(4+\gamma^2)} =\frac{2+\gamma^2}{4+\gamma^2},
	\end{equation} 
	there are two situations to minimize $\Psi(\omega) $: 
	\begin{enumerate}
		\item[(1)] If $\omega_0>\frac{4}{5}$, that is, $\gamma >\sqrt{6}$ or $\alpha <\frac{h^4}{96}$, then 
		\begin{equation*}
			\Psi(\omega) _{\rm min} =\Psi(\omega_0) =\frac{\gamma^2}{(4+\gamma^2)(1+\gamma^2)}=\frac{1}{5+\gamma^2+\frac{4}{\gamma^2}}\leq \frac{1}{5+6+4/6}=\frac{3}{35},
		\end{equation*} 
		which means that  $\mu_{\rm opt, CJR} = \sqrt{\Psi(\omega_0) }\leq \sqrt{3/35}\approx 0.293$.

		\item[(2)] If $\omega_0\leq \frac{4}{5}$, that is, $\gamma \leq \sqrt{6}$ or $\alpha \geq \frac{h^4}{96}$, then 
		\begin{equation*}
			\Psi(\omega) _{\rm min} =\Psi(4/5)=\frac{9+\gamma^2}{25(1+\gamma^2)}.
		\end{equation*} 
		It follows that  
		\begin{equation*}
		 \sqrt{3/35}= \mu_{\rm opt, CJR}(\sqrt{6}) \leq \mu_{\rm opt, CJR}(\gamma) =\sqrt{\Psi(4/5)}\leq \mu_{\rm opt, CJR}(0) =3/5,
		\end{equation*} 
		which completes the proof for $q=2$.
	\end{enumerate}
	
	Next, we consider coarsening by three.  For $\boldsymbol{\theta}\in T^{\rm H_3}$, from Theorem \ref{thm:Jacobi-H3-opt-mu}, we have $\tau_1=1/4, \tau_2=2$ and $\tau_0=8/9$.   
	From \eqref{eq:general-minmax-form}, we have 
	\begin{equation}
		\mu_{\rm opt, CJR}=\min_{\omega}\max_{\boldsymbol{\theta}\in T^{\rm H_3}}|\lambda(I-\omega \widetilde{B}_{J}^{-1}\widetilde{A}_h)|
		=\min_{\omega \in [8/9,\infty)} \sqrt{\Psi(\omega)},
	\end{equation}
	where $\Psi$ is defined in \eqref{eq:Psi-defi}.  From \eqref{eq:w0-form},  there are two situations to minimize $\Psi(\omega) $: 
	\begin{enumerate}
		\item[(1)] If $\omega_0>\frac{8}{9}$, that is, $\gamma >\sqrt{14}$ or $\alpha <\frac{h^4}{224}$, then 
		\eqn
			\Psi(\omega) _{\rm min} =\Psi(\omega_0) =\frac{\gamma^2}{(4+\gamma^2)(1+\gamma^2)}=\frac{1}{5+\gamma^2+\frac{4}{\gamma^2}} 
			\leq \frac{1}{5+14+4/14}=
			\frac{7}{135},
		\een 
		which means that  $\mu_{\rm opt, CJR}=\sqrt{\Psi(\omega_0) } \leq \sqrt{7/135}\approx 0.228$.

		\item[(2)] If $\omega_0\leq \frac{8}{9}$, that is, $\gamma \leq \sqrt{14}$ or $\alpha \geq \frac{h^4}{224}$, then 
		\begin{equation*}
			\Psi(\omega) _{\rm min} =\Psi(8/9)=\frac{49+\gamma^2}{81(1+\gamma^2)}.
		\end{equation*} 
		It follows that    
		\begin{equation*}
			 \sqrt{7/135}= \mu_{\rm opt, CJR}(\sqrt{14}) =\sqrt{\Psi(8/9)} \leq \mu_{\rm opt, CJR}(\gamma) 
			\leq \mu_{\rm opt, CJR}(0) =7/9,
		\end{equation*} 
		which completes the proof for $q=3$.
	\end{enumerate}

	Next, we consider coarsening by four.  For $\boldsymbol{\theta}\in T^{\rm H_4}$, from Theorem \ref{thm:Jacobi-H4-opt-mu}, we have $\tau_1=\frac{2-\sqrt{2}}{4}, \tau_2=2$ and $\tau_0=\frac{8}{10-\sqrt{2}}$.   
	From \eqref{eq:general-minmax-form}, we have 
	\begin{equation}
		\mu_{\rm opt, CJR}=\min_{\omega}\max_{\boldsymbol{\theta}\in T^{\rm H_3}}|\lambda(I-\omega \widetilde{B}_{J}^{-1}\widetilde{A}_h)|
		=\min_{\omega \in \left[8/(10-\sqrt{2}),\infty\right)} \sqrt{\Psi(\omega)},
	\end{equation}
	where $\Psi$ is defined in \eqref{eq:Psi-defi}.  From \eqref{eq:w0-form},  there are two situations to minimize $\Psi(\omega) $:
	\begin{enumerate}
		\item[(1)] If $\omega_0>\frac{8}{10-\sqrt{2}}$, that is, $\gamma >\sqrt{ (12+2\sqrt{2})/(2-\sqrt{2})}=\gamma_0$ or $\alpha <\frac{h^4(2-\sqrt{2})}{32(6+\sqrt{2})}$, then 
		\begin{align*}
			\Psi(\omega) _{\rm min} =\Psi(\omega_0) &=\frac{\gamma^2}{(4+\gamma^2)(1+\gamma^2)}=\frac{1}{5+\gamma^2+\frac{4}{\gamma^2}}
			 \leq \frac{1}{5+\gamma^2_0+\frac{4}{\gamma^2_0}}\\
			  &= \frac{1}{5+  (12+2\sqrt{2})/(2-\sqrt{2})+\frac{4(2-\sqrt{2})}{ 12+2\sqrt{2}}}\approx 0.0328,
		\end{align*} 
		which means that  $\mu_{\rm opt, CJR} =\sqrt{\Psi(\omega_0) } \leq \sqrt{0.0328}\approx 0.181.$
		\item[(2)] If $\omega_0\leq \frac{8}{10-\sqrt{2}}$, that is, $\gamma \leq \sqrt{ (12+2\sqrt{2})/(2-\sqrt{2})}$ or $\alpha \geq \frac{h^4(2-\sqrt{2})}{32(6+\sqrt{2})}$, then 
		\eqn
			\Psi(\omega) _{\rm min} &=\Psi(8/(10-\sqrt{2})) 
			 =\frac{ (4+\gamma^2)(8/(10-\sqrt{2}))^2-(4+2\gamma^2) (8/(10-\sqrt{2}))+1+\gamma^2}{1+\gamma^2}.
		\een
		It follows that  $\mu_{\rm opt, CJR}(\gamma) =\sqrt{\Psi(8/(10-\sqrt{2}))}$,
		and 
		\begin{equation*}
			 \mu_{\rm opt, CJR}( \gamma_0 ) \leq \mu_{\rm opt, CJR}(\gamma) 
			\leq \mu_{\rm opt, CJR}(0) =(6+\sqrt{2})/(10-\sqrt{2})\approx 0.864,
		\end{equation*} 
 		which completes the proof for $q=4$.  
	\end{enumerate}
\end{proof}
We remark that when $\alpha$ is very small compared to $h^4$,  the CJR scheme with the optimal damping parameter $\omega=\omega_0(\gamma)$ defined in \eqref{eq:w0-form}   converges very fast. However, when $\alpha$ is large, the optimal smoothing  factor gets too slow for practical use.  In the next subsection, we focus on a new BSR scheme, which can dramatically improve the convergence rates for the difficult cases with much larger $\alpha$ or smaller $h$.

\subsection{The mass-based BSR scheme}
In this subsection, we study the smoothing property of BSR scheme given in \eqref{eq:relaxation-scheme} with $B_h=B_m$.
Let $\widetilde{C}_h=b$, see \eqref{eq:exact-BSR-relaxation}. From \eqref{eq:mass-stencil} and \eqref{eq:symbol-form}, it follows from $Q_h=C_h^{-1}$  that
\begin{equation}\label{eq:symbol-Qh}
	\widetilde{Q}_h=\frac{h^2}{9}(4+2\cos\theta_1+2\cos\theta_2+\cos\theta_1\cos\theta_2)=1/b=\frac{1}{\widetilde{C}_h}.
\end{equation}
Thus, the symbol of   $B_m$ is 
\begin{equation*}
	\widetilde{B}_m = \begin{bmatrix}
		b& -1/\alpha     \\
		1 & a
	\end{bmatrix}.
\end{equation*} 
To find the eigenvalues of $\widetilde{B}_m^{-1}\widetilde{A}_h$, we compute the determinant of $\widetilde{A}_h-\lambda \widetilde{B}_m$:
\begin{equation*}
	| \widetilde{A}_h-\lambda\widetilde{B}_m| = 
	\begin{vmatrix}
		a - \lambda b      & -(1-\lambda)/\alpha   \\
		1 - \lambda  &   a-\lambda a
	\end{vmatrix} 
	=  (1-\lambda) \left( a(a-\lambda b) + (1-\lambda)/\alpha\right). 
\end{equation*}
Thus, the two real eigenvalues of  $\widetilde{B}_m^{-1}\widetilde{A}_h$  are $\lambda_1=1$ and 
\begin{equation}\label{eq:lambda2-expression}
	\lambda_2= \frac{1+\alpha a^2}{1+\alpha ab}. 
\end{equation}
Now, we have to find the range of $\lambda_2$ for $\boldsymbol{\theta} \in T^{\rm H_q}$. Before doing that, we first investigate  the optimal smoothing factor of  mass-based relaxation scheme for the  scalar Laplacian operator, that is, $S_h=I-\omega Q_h L_h$.
\begin{lemma}\label{lem:mass-optimal-mu} \cite{He21MassStokes}
	For standard coarsening ($q=2$),     $\frac{a}{b} \in \left [\frac{8}{9}, \frac{16}{9}\right]$ for $\boldsymbol{\theta} \in T^{\rm H_2}$.  Moreover, the optimal smoothing factor of mass-based relaxation scheme for the scalar Laplacian is  
	\begin{equation}
		\mu_{\rm opt}		=\min_{\omega}  \max \left\{|1-8/9 \omega|, |1-16/9\omega|\right\} =\frac{1}{3}\approx 0.333,
	\end{equation} 
	provided that $\omega =\frac{3}{4}$.
\end{lemma}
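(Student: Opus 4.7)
I decompose the lemma into two independent steps: (a) identify the range of the ratio $\tau := a/b$ over the high-frequency region $T^{\rm H_2}$, and (b) solve the resulting scalar min-max problem $\min_\omega \max_\tau |1-\omega \tau|$ on that interval.

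For step (a), setting $\eta_i = \cos\theta_i$ and combining \eqref{eq:symbol-Lh} with \eqref{eq:symbol-Qh} (using $\widetilde{Q}_h = 1/b$) yields the explicit $h$-independent polynomial
\[
\tau(\eta_1,\eta_2) \;=\; \frac{(4-2\eta_1-2\eta_2)(4+2\eta_1+2\eta_2+\eta_1\eta_2)}{9}.
\]
Using $2\pi$-periodicity in each variable, the high-frequency region $T^{\rm H_2}$ corresponds to the closed set $\mathcal{D}=\{(\eta_1,\eta_2)\in[-1,1]^2 : \eta_1\le 0 \text{ or } \eta_2\le 0\}$. I locate interior stationary points from $\nabla\tau = 0$; by the symmetry $\tau(\eta_1,\eta_2)=\tau(\eta_2,\eta_1)$ one can restrict to the diagonal $\eta_1=\eta_2=\eta$, and the only admissible critical value turns out to be $\eta=0$, with $\tau(0,0)=16/9$. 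A routine sweep over the boundary of $\mathcal{D}$ (the outer edges $\eta_1=-1$ and $\eta_2=-1$; the partial outer edges $\eta_1=1$ on $\eta_2\in[-1,0]$ and $\eta_2=1$ on $\eta_1\in[-1,0]$; and the two interior segments $\eta_1=0$ and $\eta_2=0$) reduces to univariate quadratic or cubic optimizations and confirms that $\tau$ attains its global minimum $8/9$ at $(-1,-1)$ and its global maximum $16/9$ at $(0,0)$.

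For step (b), the symbol of $S_h = I - \omega Q_h L_h$ is the scalar $1-\omega\tau$, and for fixed $\omega$ the map $\tau\mapsto|1-\omega\tau|$ is piecewise linear (hence convex) in $\tau$, so its maximum over $[8/9,16/9]$ is attained at an endpoint, giving
\[
\mu_{\rm loc}(S_h)\;=\;\max\bigl\{|1-8\omega/9|,\;|1-16\omega/9|\bigr\}.
\]
A standard Chebyshev-style balancing then requires the two endpoint values to agree with opposite signs, $1-8\omega/9 = -(1-16\omega/9)$, which yields $\omega=3/4$ and $\mu_{\rm opt}=|1-8\cdot(3/4)/9|=1/3$.

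The main obstacle lies in step (a): the domain $\mathcal{D}$ is non-convex and its boundary consists of several pieces, so one must be careful not to miss a candidate extremum. On each piece, however, $\tau$ reduces to a low-degree polynomial in a single variable, so the calculations remain elementary. The only mildly delicate observation is that the unconstrained interior critical point $(0,0)$ lies on $\partial\mathcal{D}$ rather than in its relative interior, so the global maximum over $\mathcal{D}$ is attained at a boundary point and no additional interior candidate arises.
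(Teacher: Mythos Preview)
Your proposal is correct. The paper does not actually prove this lemma---it is quoted from \cite{He21MassStokes}---but your argument mirrors exactly the method the paper uses for the analogous $q=4$ statement (Theorem~\ref{thm:mass-optimal-mu-four}): write the symbol ratio $\widetilde{Q}_h\widetilde{L}_h=\tfrac{2}{9}\Upsilon(\eta_1,\eta_2)$ in the cosine variables, locate the interior critical point $(0,0)$ from $\nabla\Upsilon=0$, sweep the boundary pieces of the image region, and then balance the two extreme values $|1-\tau_{\min}\omega|=|1-\tau_{\max}\omega|$ to obtain the optimal $\omega$ and $\mu_{\rm opt}$.

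One small caution on phrasing: the sentence ``by the symmetry $\tau(\eta_1,\eta_2)=\tau(\eta_2,\eta_1)$ one can restrict to the diagonal'' is not valid as a general principle, since symmetric functions can have off-diagonal critical points. In the present case the conclusion is nonetheless correct because, after discarding the nonvanishing factors $2+\eta_i>0$, the gradient system reduces to the linear pair $2\eta_1+\eta_2=0$, $\eta_1+2\eta_2=0$, which forces $\eta_1=\eta_2=0$; you should state that short computation rather than invoke symmetry of $\tau$. The rest of the argument, including the boundary sweep and the observation that the global maximum $(0,0)$ sits on $\partial\mathcal{D}$, is sound.
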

\begin{lemma}\label{lem:mass-optimal-mu-three} \cite{he2022optimal}
	For   coarsening by three  ($q=3$),   $\frac{a}{b} \in \left [\frac{5}{6}, \frac{16}{9}\right]$ for  $\boldsymbol{\theta} \in T^{\rm H_3}$. Moreover, the optimal smoothing factor of mass-based relaxation scheme for the scalar Laplacian is  
	\begin{equation}
		\mu_{\rm opt}		=\min_{\omega} \max  \left\{|1-5/6 \omega|, |1-16/9 \omega| \right\} =\frac{17}{47}\approx 0.362,
	\end{equation} 
	provided that $\omega =\frac{36}{47}$.
\end{lemma}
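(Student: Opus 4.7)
The plan is to follow the same template used in Theorem~\ref{thm:Jacobi-H3-opt-mu}, but with the ratio $\widetilde{D}_h^{-1}\widetilde{L}_h$ replaced by $\widetilde{Q}_h\widetilde{L}_h=a/b$. First, using \eqref{eq:symbol-Lh} and \eqref{eq:symbol-Qh}, I would write
\begin{equation*}
\frac{a}{b}=\widetilde{Q}_h\widetilde{L}_h=\tfrac{1}{9}\bigl(4+2\eta_1+2\eta_2+\eta_1\eta_2\bigr)\bigl(4-2\eta_1-2\eta_2\bigr),
\end{equation*}
where $\eta_i=\cos\theta_i$, and denote this scalar-valued function by $\Phi(\eta_1,\eta_2)$. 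The spatial scale $h$ cancels exactly, so $a/b$ depends only on $(\eta_1,\eta_2)$, which makes the problem a two-variable constrained extremum question on the region $\mathcal{D}^{(3)}$ from \eqref{eq:H3-range}.

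Second, I would locate the maximum and minimum of $\Phi$ over $\mathcal{D}^{(3)}$ in the usual way: compute the critical points inside by setting $\partial_{\eta_1}\Phi=\partial_{\eta_2}\Phi=0$, then exploit the symmetry $\Phi(\eta_1,\eta_2)=\Phi(\eta_2,\eta_1)$ to reduce the boundary analysis to the three pieces
\begin{equation*}
\partial\mathcal{D}_1=\{-1\}\times[-1,1],\quad \partial\mathcal{D}_2=\{1\}\times[-1,1/2],\quad \partial\mathcal{D}_3=[1/2,1]\times\{1/2\},
\end{equation*}
exactly as in the proof of Theorem~\ref{thm:Jacobi-H3-opt-mu}. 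On each edge the problem reduces to a one-variable polynomial optimization over a short interval, which I would finish by computing the relevant derivative and evaluating at critical points and endpoints. The expected outcome is that the maximum $16/9$ is attained at $(\eta_1,\eta_2)=(-1,-1)$ (where the two mass-matrix factors combine to $9$ and $L_h$ contributes $8/h^2$ relative to $4/h^2$ in $a_1$) and that the minimum $5/6$ is attained on $\partial\mathcal{D}_3$ (or its symmetric image), confirming $a/b\in[5/6,16/9]$.

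Third, once the extremes $\tau_1=5/6$ and $\tau_2=16/9$ are established, the outer minimization is a textbook min-max of two absolute-value affine functions of $\omega$. The minimum is attained when $1-\tau_1\omega=-(1-\tau_2\omega)$, giving
\begin{equation*}
\omega=\frac{2}{\tau_1+\tau_2}=\frac{2}{5/6+16/9}=\frac{36}{47},\qquad \mu_{\rm opt}=1-\tfrac{5}{6}\cdot\tfrac{36}{47}=\frac{17}{47},
\end{equation*}
and a short monotonicity check verifies that this equalization point is the global minimum.

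The main obstacle is the second step: verifying which boundary piece and which interior/endpoint of that piece realizes the minimum of the product $\Phi$ over $\mathcal{D}^{(3)}$. Because $\Phi$ is a product of a convex-looking quadratic-type factor and a linear factor with opposite monotonicity in $\eta_1,\eta_2$, the minimum need not sit at a corner, and one has to carefully compare several candidates (for instance $(1,1/2)$ on $\partial\mathcal{D}_2\cap\partial\mathcal{D}_3$ against the interior stationary point of the edge polynomial on $\partial\mathcal{D}_3$). Everything else — the range identification of $\mathcal{D}^{(3)}$, the equalization argument for $\omega$, and the final arithmetic — is routine and mirrors Theorem~\ref{thm:Jacobi-H3-opt-mu} precisely.
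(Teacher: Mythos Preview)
The paper does not prove this lemma; it is quoted from \cite{he2022optimal}. Your plan is exactly the template the paper uses for the analogous $q=4$ result (Theorem~\ref{thm:mass-optimal-mu-four}): write $a/b=\tfrac{2}{9}\Upsilon(\eta_1,\eta_2)$ with $\Upsilon=(2-\eta_1-\eta_2)(4+2\eta_1+2\eta_2+\eta_1\eta_2)$, find interior critical points, then use the symmetry to reduce the boundary of $\mathcal{D}^{(3)}$ to the three edges $\partial\mathcal{D}_1,\partial\mathcal{D}_2,\partial\mathcal{D}_3$, and finish with the equalization $\omega=2/(\tau_1+\tau_2)$. So methodologically you are aligned with the paper.

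One concrete slip to fix: your expected location of the maximum is wrong. At $(\eta_1,\eta_2)=(-1,-1)$ one gets $\Upsilon(-1,-1)=4$, hence $a/b=8/9$, not $16/9$. The maximum $16/9$ is achieved at the \emph{interior} critical point $(\eta_1,\eta_2)=(0,0)$, where $\Upsilon(0,0)=8$; this is precisely the interior stationary value your first step is supposed to produce (and it is the same point that gives the maximum in the $q=4$ proof). The minimum $5/6$ is attained at the corner $(\eta_1,\eta_2)=(1,1/2)$ (equivalently $(1/2,1)$), where $\Upsilon=15/4$; so it sits at the intersection of $\partial\mathcal{D}_2$ and $\partial\mathcal{D}_3$ rather than at an interior point of an edge. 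With these corrections the rest of your outline goes through verbatim.
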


\begin{theorem}\label{thm:mass-optimal-mu-four}  
	For   coarsening by four  ($q=4$),    $\frac{a}{b} \in \left [\frac{3 -\sqrt{2}}{3}, \frac{16}{9}\right]$ for  $\boldsymbol{\theta} \in T^{\rm H_4}$. Moreover, the optimal smoothing factor of mass-based relaxation scheme for the scalar Laplacian is  
      \begin{equation}
		\mu_{\rm opt}
		 =\min_{\omega} \max  \left\{|1-(3 -\sqrt{2})/3 \omega|, |1-16/9 \omega| \right\} \\
		  =\frac{7+3\sqrt{2}}{25-3\sqrt{2}}\approx 0.542,
      \end{equation} 
	provided that $\omega =\frac{18}{25-3\sqrt{2}}\approx 0.867$.
\end{theorem}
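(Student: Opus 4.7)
The plan is to mimic the proof of Theorem \ref{thm:Jacobi-H4-opt-mu}, with the main twist being that $\widetilde{D}_h^{-1}\widetilde{L}_h$ (an affine function of $\cos\theta_1,\cos\theta_2$) is replaced by $a/b=\widetilde{Q}_h\widetilde{L}_h$, which is a product of two affine factors and hence cubic in $(\eta_1,\eta_2)=(\cos\theta_1,\cos\theta_2)$. Writing $S_h = I - \omega Q_h L_h$ and using \eqref{eq:symbol-Lh} together with \eqref{eq:symbol-Qh}, the smoothing factor reduces to a scalar minmax,
\[
\mu_{\rm opt} \;=\; \min_{\omega}\;\max_{(\eta_1,\eta_2)\in\mathcal{D}^{(4)}}\;\bigl|\,1-\omega\,\psi(\eta_1,\eta_2)\bigr|,
\qquad
\psi(\eta_1,\eta_2) \;=\; \frac{(4-2\eta_1-2\eta_2)(4+2\eta_1+2\eta_2+\eta_1\eta_2)}{9},
\]
with $\mathcal{D}^{(4)}$ given in \eqref{eq:H4-range}. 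The task therefore splits into (i) computing $\tau_1=\min\psi$ and $\tau_2=\max\psi$ over $\mathcal{D}^{(4)}$, and (ii) solving the resulting equioscillation problem in $\omega$.

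For step (i), I will exploit the symmetry $\psi(\eta_1,\eta_2)=\psi(\eta_2,\eta_1)$ to restrict to half of $\mathcal{D}^{(4)}$ and decompose its boundary exactly as in the proof of Theorem \ref{thm:Jacobi-H4-opt-mu} into the three pieces $\partial\mathcal{D}_1=\{-1\}\times[-1,1]$, $\partial\mathcal{D}_2=\{1\}\times[-1,\sqrt{2}/2]$, and $\partial\mathcal{D}_3=[\sqrt{2}/2,1]\times\{\sqrt{2}/2\}$. Solving $\nabla\psi=0$ produces the interior critical point $(0,0)\in\mathcal{D}^{(4)}$ with $\psi(0,0)=16/9$; after checking that no boundary value exceeds this, I obtain $\tau_2=16/9$, consistent with Lemmas \ref{lem:mass-optimal-mu}--\ref{lem:mass-optimal-mu-three}. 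For $\tau_1$, along each boundary segment $\psi$ reduces to a quadratic in one variable, and elementary one-variable calculus locates its minimum. The critical piece is $\partial\mathcal{D}_3$: with $\eta_2=\sqrt{2}/2$ fixed, $\psi$ becomes a downward-opening parabola in $\eta_1$ whose vertex lies outside $[\sqrt{2}/2,1]$, so the minimum on this segment is at the endpoint $(1,\sqrt{2}/2)$, yielding
\[
\psi\bigl(1,\tfrac{\sqrt{2}}{2}\bigr) \;=\; \frac{(2-\sqrt{2})(6+\tfrac{3\sqrt{2}}{2})}{9} \;=\; \frac{9-3\sqrt{2}}{9} \;=\; \frac{3-\sqrt{2}}{3}.
\]
Verifying that values along $\partial\mathcal{D}_1$, $\partial\mathcal{D}_2$ (and their mirrored counterparts) stay above this bound then confirms $\tau_1=(3-\sqrt{2})/3$.

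For step (ii), since $0<\tau_1<1<\tau_2$, the optimal $\omega$ in the minmax $\min_\omega\max\{|1-\tau_1\omega|,\,|1-\tau_2\omega|\}$ is found by equating the two envelope expressions at the positive root, giving
\[
\omega_{\rm opt} \;=\; \frac{2}{\tau_1+\tau_2} \;=\; \frac{2}{\tfrac{3-\sqrt{2}}{3}+\tfrac{16}{9}} \;=\; \frac{18}{25-3\sqrt{2}},
\qquad
\mu_{\rm opt} \;=\; \frac{\tau_2-\tau_1}{\tau_1+\tau_2} \;=\; \frac{7+3\sqrt{2}}{25-3\sqrt{2}},
\]
which is exactly the claimed value. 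The main obstacle is step (i): unlike the purely affine case treated in Theorem \ref{thm:Jacobi-H4-opt-mu}, $\psi$ is a genuine cubic, so one must carefully rule out spurious interior critical points of $\psi$ inside $\mathcal{D}^{(4)}$ and argue monotonicity along each boundary piece; this is where the irrational bound $(3-\sqrt{2})/3$ is introduced, and once both extremes are secured the remaining minmax step is routine.
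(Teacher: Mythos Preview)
Your proposal is correct and follows essentially the same approach as the paper: write $\widetilde{Q}_h\widetilde{L}_h$ as a symmetric cubic in $(\eta_1,\eta_2)=(\cos\theta_1,\cos\theta_2)$, locate the unique interior critical point $(0,0)$ giving the maximum $16/9$, reduce to the three boundary segments $\partial\mathcal{D}_1,\partial\mathcal{D}_2,\partial\mathcal{D}_3$ of $\mathcal{D}^{(4)}$ to find the minimum $(3-\sqrt{2})/3$ at $(1,\sqrt{2}/2)$, and finish with the standard equioscillation argument for $\omega$. The only cosmetic difference is that the paper factors out $2/9$ and works with $\Upsilon(\eta_1,\eta_2)=(2-\eta_1-\eta_2)(4+2\eta_1+2\eta_2+\eta_1\eta_2)$ rather than your $\psi=\tfrac{2}{9}\Upsilon$.
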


\begin{proof}
	From \eqref{eq:symbol-Lh} and \eqref{eq:symbol-Qh}, we have 
	\begin{equation}\label{eq:symbol-QL}
		\widetilde{Q}_h\widetilde{L}_h=\frac{2}{9}(2-\cos\theta_1-\cos\theta_2)(4+2\cos\theta_1+2\cos\theta_2+\cos\theta_1\cos\theta_2).
	\end{equation} 
	Our goal is to find  the maximum and minimum of \eqref{eq:symbol-QL} over high frequencies. To do this,  let $\eta_1=\cos \theta_1$, $\eta_2=\cos \theta_2$.
	Then,  we rewrite  \eqref{eq:symbol-QL}  as $\widetilde{Q}_h \widetilde{L}_h=\frac{2}{9}\Upsilon(\eta_1,\eta_2)$, where 
	\begin{equation*} 
		\Upsilon(\eta_1,\eta_2) = (2-\eta_1-\eta_2)(4+2\eta_1+2\eta_2+\eta_1\eta_2).
	\end{equation*} 
	To find  the extreme values of $\Upsilon(\eta_1,\eta_2) $ over  $\mathcal{D}^{(4)}$, we start by computing the partial derivatives:
	\begin{equation*}
		\Upsilon'_{\eta_1}  =  -(2+\eta_2)(2\eta_1+\eta_2),\quad \Upsilon'_{\eta_2} =  -(2+\eta_1)(2\eta_2+\eta_1).
	\end{equation*}
	Let $\Upsilon'_{\eta_1}=\Upsilon'_{\eta_2}=0$ with $\boldsymbol{\theta} \in T^{\rm H_4}$. We have $\eta_1=\eta_2=0$. So   $\Upsilon(0,0)=8$ might be an extreme value.
	
	Next, we compute the extreme values of $\Upsilon(\eta_1,\eta_2)$ at the boundary of $\mathcal{D}^{(4)}$, see \eqref{eq:H4-range}. Since $\Upsilon(\eta_1,\eta_2) =\Upsilon(\eta_2,\eta_1) $,  we only need to consider the following three boundaries:  
	\begin{equation*}
		\partial \mathcal{D}_1 =\{-1\} \times [-1,1], \, \partial \mathcal{D}_2=\{1\} \times \left[-1,\sqrt{2}/2\right], \,
		\partial \mathcal{D}_3 =\left [\sqrt{2}/2,1\right]\times \left\{\sqrt{2}/2\right\}. 
	\end{equation*} 
	
	\begin{enumerate}
		\item For $ (\eta_1,\eta_2)\in \partial \mathcal{D}_1$,  we have  $\Upsilon(\eta_2,\eta_2) =\Upsilon(-1,\eta_2) = (3-\eta_2)(2+\eta_2)$.
		It is easy to see that the extreme values of  $\Upsilon(-1,\eta_2) $  over $\eta_2 \in[-1,1]$ are 
		\begin{equation*}
			\Upsilon(-1,\eta_2) _{\rm max}  = \Upsilon(-1,1/2)  = \frac{25}{4},\quad 
			\Upsilon(-1,\eta_2) _{\rm min}  = \Upsilon(-1,-1)  =4.
		\end{equation*}

		\item For $ (\eta_1,\eta_2)\in \partial \mathcal{D}_2$,  we have $\Upsilon(\eta_1,\eta_2) =\Upsilon(1,\eta_2) = 3(1-\eta_2)(2+\eta_2).$
		It follows that the extreme values of  $\Upsilon(1,\eta_2) $  over $\eta_2 \in[-1,\sqrt{2}/2]$ are given by
		\begin{equation*}
			\Upsilon(1,\eta_2) _{\rm max}  = \Upsilon(1,-1/2)  = \frac{27}{4},\quad 
			\Upsilon(1,\eta_2) _{\rm min}  = \Upsilon(1,\sqrt{2}/2)  =\frac{9-3\sqrt{2}}{2}.
		\end{equation*} 
		\item For $ (\eta_1,\eta_2)\in \partial \mathcal{D}_3$,  $\Upsilon(\eta_1,\eta_2) =\Upsilon(\eta_1,\sqrt{2}/2) =  (2-\sqrt{2}/2-\eta_1)(4+\sqrt{2}+2 \eta_1+\sqrt{2}\eta_1/2).$ Since  $\eta_1 \in[\sqrt{2}/2,1]$, we have 
		\eqn
			\Upsilon(\eta_1,\sqrt{2}/2) _{\rm max}  = \Upsilon(\sqrt{2}/2,\sqrt{2}/2)  = \frac{10-\sqrt{2}}{2},  \\
			\Upsilon(\eta_1,\sqrt{2}/2) _{\rm min}  = \Upsilon(1,\sqrt{2}/2)  =\frac{9-3\sqrt{2}}{2}.
		\een
	\end{enumerate}
	W conclude that  for  $\boldsymbol{\theta}\in T^{\rm H_4}$, $\Upsilon(\eta_1,\eta_2)_{\rm max} =\Upsilon(0,0)=8$ and $\Upsilon(\eta_1,\eta_2)_{\rm min}=\Upsilon(1,\sqrt{2}/2)=\frac{9-3\sqrt{2}}{2}$. It follows that when  $\boldsymbol{\theta}\in T^{\rm H_4}$,
	\begin{equation*} 
		(\widetilde{Q}_h \widetilde{L}_h)_{\rm max} =\frac{2}{9}\times 8=\frac{16}{9}, \quad   (\widetilde{Q}_h \widetilde{L}_h)_{\rm min} =\frac{2}{9}\times\frac{9-3\sqrt{2}}{2}=\frac{3-\sqrt{2}}{3}.
	\end{equation*} 
	Thus,
	\begin{equation*}
		\max_{\boldsymbol{\theta} \in T^{\rm {H_4}}} |1-\omega \widetilde{Q}_h \widetilde{L}_h|=\max \left\{\left|1- 16\omega/9\right|, \left|1- (3-\sqrt{2})\omega/3\right| \right\}.
	\end{equation*} 
	To minimize $\max_{\boldsymbol{\theta} \in T^{\rm {H_4}}} |1-\omega \widetilde{Q}_h \widetilde{L}_h|$ over $\omega$, it requires that
	$$\left|1- 16\omega/9 \right| =\left|1- (3-\sqrt{2})\omega/3\right|,$$ which gives $\omega=\frac{2}{16/9+ (3-\sqrt{2})/3}=\frac{ 18}{25-3\sqrt{2}}\approx 0.867$. Furthermore, 
	\begin{equation*}
		\mu_{\rm opt}=\left |1-\frac{16}{9}\times \frac{ 18}{25-3\sqrt{2}}\right |=\frac{7+3\sqrt{2}}{25-3\sqrt{2}}\approx 0.542,
	\end{equation*}
	which is the desired result.
\end{proof}

Since $\lambda_2$ is related to $a$ and $b$, from Lemmas \ref{lem:mass-optimal-mu}, \ref{lem:mass-optimal-mu-three}, and Theorem \ref{thm:mass-optimal-mu-four}, we are able to find the lower and upper bounds  for $\lambda_2$, which  gives a upper bound on the optimal smoothing factor for the mass-based BSR scheme. Such bounds are useful to predict the actual convergence factors.
\begin{theorem}\label{thm: BSR-upper-bound}
	For the  BSR scheme \eqref{eq:relaxation-scheme} with damping parameters $\omega=3/4$, $36/47$,$18/(25-3\sqrt{2})$ for $q=2,3,4$, respectively, the optimal smoothing factor with standard coarsening  is smaller  than $\frac{1}{3}\approx 0.333$,  with coarsening by three is smaller than $\frac{17}{47}\approx 0.362$, and with  coarsening by four  is smaller  than $\frac{7+3\sqrt{2}}{25-3\sqrt{2}}\approx 0.542$.
\end{theorem}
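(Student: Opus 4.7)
The plan is to exploit the explicit formulas for the two eigenvalues $\lambda_1 = 1$ and $\lambda_2 = (1+\alpha a^2)/(1+\alpha a b)$ of $\widetilde{B}_m^{-1}\widetilde{A}_h$ already derived before the theorem, together with the known ranges of $a/b$ provided by Lemma \ref{lem:mass-optimal-mu}, Lemma \ref{lem:mass-optimal-mu-three}, and Theorem \ref{thm:mass-optimal-mu-four}. The core observation I would build on is that $\lambda_2$ can be rewritten in a form that decouples its dependence on $\alpha$ from its dependence on the ratio $t := a/b$, which is the quantity whose range is already known.

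First I would set $s := \alpha a b > 0$ and $t := a/b > 0$; a short algebraic manipulation yields
\begin{equation*}
\lambda_2 \;=\; \frac{1+st}{1+s} \;=\; \frac{1}{1+s}\cdot 1 \;+\; \frac{s}{1+s}\cdot t,
\end{equation*}
so $\lambda_2$ is a convex combination of $1$ and $t$. Since $s/(1+s) \in (0,1)$ for all $\alpha>0$ and $a,b>0$, the eigenvalue $\lambda_2$ always lies between $1$ and $t$. Invoking the cited results, which give $t \in [t_1, t_2]$ with $t_1 < 1 < t_2 = 16/9$ in each case, I obtain the uniform bound $\lambda_2 \in [t_1, t_2]$ over $\boldsymbol{\theta} \in T^{\mathrm{H}_q}$.

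Next, since the eigenvalues of $\widetilde{S}_h(\omega, \boldsymbol{\theta}) = I - \omega \widetilde{B}_m^{-1}\widetilde{A}_h$ are $1-\omega$ and $1-\omega \lambda_2$, I get
\begin{equation*}
\mu_{\mathrm{loc}}(S_h(\omega)) \;\leq\; \max\bigl\{|1-\omega|,\; |1-\omega t_1|,\; |1-\omega t_2|\bigr\}.
\end{equation*}
This is a standard Chebyshev-type min-max, solved by the balancing choice $\omega^\star = 2/(t_1+t_2)$, which equalizes $|1-\omega^\star t_1| = |1-\omega^\star t_2| = (t_2-t_1)/(t_1+t_2)$. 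The routine side check $|1-\omega^\star| \leq (t_2-t_1)/(t_1+t_2)$ reduces to $t_1 \leq 1$, which holds in all three cases. Finally, substituting the three pairs $(t_1, t_2) = (8/9, 16/9)$, $(5/6, 16/9)$, and $((3-\sqrt{2})/3, 16/9)$ produces $\omega^\star = 3/4,\ 36/47,\ 18/(25-3\sqrt{2})$ and upper bounds $1/3,\ 17/47,\ (7+3\sqrt{2})/(25-3\sqrt{2})$, respectively, matching the statement.

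There is no serious obstacle: the only real idea is the convex-combination rewriting of $\lambda_2$, which collapses the two-variable symbol dependence $(a,b)$ onto the one-dimensional ratio $t$ whose range has already been analyzed in the cited scalar-Laplacian lemmas. Once that is in place, the argument is purely a Chebyshev-style equioscillation computation plus direct arithmetic. The one subtlety worth flagging in the write-up is that the result is an \emph{upper bound} rather than a sharp value, because not every pair $(t, s) \in [t_1,t_2] \times (0,\infty)$ is realized by an actual high frequency $\boldsymbol{\theta}$; the bound is nonetheless valid and, as the numerical section will confirm, tight in practice.
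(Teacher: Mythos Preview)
Your proposal is correct and follows essentially the same route as the paper: show that $\lambda_2$ lies in the interval $[t_1,t_2]$ already established for $a/b$ in the scalar-Laplacian lemmas, observe that $\lambda_1=1$ also lies there, and then invoke those lemmas to read off the optimal $\omega$ and smoothing-factor bound. The only difference is cosmetic: the paper verifies $\lambda_2\in(t_1,t_2)$ by directly checking the signs of $\lambda_2-t_1$ and $\lambda_2-t_2$, whereas your convex-combination rewriting $\lambda_2=\frac{1}{1+s}\cdot 1+\frac{s}{1+s}\cdot t$ makes the same containment immediate and arguably more transparent.
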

\begin{proof}
	We first compute
	\begin{equation}
		\lambda_2-\sigma =\frac{1-\sigma+\alpha a(a-\sigma b)}{1+\alpha ab}.
	\end{equation} 
	From Lemma \ref{lem:mass-optimal-mu},  for $\boldsymbol{\theta} \in T^{\rm H_2}$,  $\frac{a}{b} \in \left [\frac{8}{9}, \frac{16}{9}\right]$ and $a>0,b>0$.  It is easy to verify that 
	\begin{eqnarray*}
		\lambda_2-\frac{8}{9} &=&\frac{1-8/9 + \alpha a(a-8 b/9)}{1+\alpha ab}>0,\\
		\lambda_2-\frac{16}{9} &=&\frac{1-16/9 + \alpha a(a-16 b/9)}{1+\alpha ab}<0. 
	\end{eqnarray*}
	It follows that $\lambda_2 \in \left (\frac{8}{9}, \frac{16}{9}\right)$. Since $\lambda_1=1$, using Lemma \ref{lem:mass-optimal-mu}, we know that the optimal smoothing  factor 
	for  BSR scheme is smaller than $\frac{1}{3}$. Similarly, using Lemma \ref{lem:mass-optimal-mu-three} and Theorem \ref{thm:mass-optimal-mu-four}, we can obtain the desired results for coarsening by three and four.
\end{proof}
Since  it is very difficult to identify the extreme values of $\lambda_2$, we do not further explore the optimal smoothing factor of BSR.
We simple choose the constant damping parameters based on Lemmas \ref{lem:mass-optimal-mu} and \ref{lem:mass-optimal-mu-three}, and Theorem \ref{thm:mass-optimal-mu-four}, for $q=2, 3, 4$, respectively, for the corresponding BSR scheme for our control problems.
Although with non-optimal relaxation parameters, Theorem \ref{thm: BSR-upper-bound} tells us that we can obtain  upper bounds on the optimal smoothing factors for BSR scheme, which are much smaller than that of the CJR scheme when $\gamma$ is small, corresponding to the second situations in each coarsening in Theorem \ref{thm:CJR-optimal}. 
The actual convergence rates of BSR scheme can be far smaller than the given upper bounds, as reported in Table \ref{Ex1_BSR_factors}.

We comment that for the control constrained case, with some zero entries in the $(1,2)$-block of $A_h$, which in some sense can be treated as the limit case with $\alpha$ goes to infinity. The above smoothing analysis we derived for CJR and BSR schemes apply to any $\alpha>0$, so this partially explains why they work well for all Jacobian systems with varying 0-1 entries. Especially for BSR, it is unconditionally convergent, and the chosen constant damping parameters are independent of $\alpha$ and $h$ for $q=2,3,4$. The LFA of the CJR scheme indicates that the optimal damping parameters for BSR scheme may also depend on $\alpha$ and $h$, which remains an open problem.  We point out there are some studies of  LFA applied to PDEs with jumping and random coefficients \cite{kumar2019local,bolten2018fourier}, which might be helpful for  studying more complicated control problems.

\section{Numerical examples} \label{SecNum}
In this section, we present some numerical examples (with a unit square domain $\Omega=(0,1)^2$) to illustrate the effectiveness of our proposed multigrid algorithms. All simulations are  implemented with MATLAB on {a Dell Precision 5820 Workstation with Intel(R) Core(TM) i9-10900X CPU@3.70GHz and 64GB RAM},
where the CPU times (in seconds) are estimated by the timing functions \texttt{tic/toc}.
In our multigrid algorithms, we use the coarse operator from re-discretization with a coarse mesh step size $H=q h$, full weighting restriction and linear interpolation operators, W or V cycle with $\nu$-pre and no post smoothing iteration, the coarsest mesh step size $h_0\ge 1/8$ (depending on $q$), and the stopping tolerance $tol=10^{-10}$ based on reduction in relative residual norms. 
We will only use $\nu=1$ in our numerical tests unless otherwise specified.
The multigrid convergence factor of $k$-th iteration  is computed according to \cite{trottenberg2000multigrid}
\eq
\varrho^{(k)}=\left( \|r_k\|_2/\|r_0\|_2 \right)^{1/k},
\ee
where $r_k=b_h-A_h v_h^{(k)}$ denotes the residual vector after the $k$-th multigrid iteration. The initial guess $v_h^{(0)}$ is chosen as uniformly distributed random numbers in $(0,1)$.
We will record  $\varrho_W=\varrho^{(k)}$ for W-cycle, and $\varrho_V=\varrho^{(k)}$ for V-cycle as the measured multigrid convergence factors, where $k$ is the smallest integer such that the tolerance is achieved. Suppose the multigrid convergence factors for the CJR and BSR schemes are $\varrho_{\star,J}$ and $\varrho_{\star,S}$, where $\star=W, V$, respectively.  Then, for the same stopping tolerance the CJR scheme will need about $\eta:=\ln(\varrho_{\star, S})/\ln(\varrho_{\star, J})$ times multigrid iterations as the BSR scheme.
Based on our LFA predictions, we have $\eta\approx 2$ for $q=2$ and $\eta\approx 4$ for $q=3,4$, which implies that the inexact BSR scheme may take 2--4 times less CPU times than the CJR scheme, as indeed observed in Figure \ref{FigEx1}.

We compare the  CJR scheme with our derived optimal damping parameters
and our mass-based  BSR scheme with $\omega=3/4, 36/47,18/(25-3\sqrt{2})$ for $q=2,3,4$ respectively. 
To achieve an improved computational efficiency for the BSR scheme, we find that it is sufficient to inexactly solve the Schur complement system, see \eqref{eq:BSR-stage1},  in BSR scheme by 2 PCG iterations (with a diagonal preconditioner).  {In the following we refer to such a PCG-based inexact version of BSR as IBSR.}
In the control-constrained cases with $\beta\ge 0$, our multigrid solver is used to approximately solve the Jacobian linear systems within the SSN iterations, where the SSN stopping tolerance is also $tol=10^{-10}$ and the initial guess is chosen as the unconstrained solution with $\beta=0$.

We remark that both CJR and BSR schemes can be easily parallelized since they involve only matrix-vector multiplications, while the commonly used effective but very expensive collective Gauss-Seidel relaxation \cite{takacs2011convergence} is more difficult to parallelize (except in the special framework of red-black ordering) and also not easy to analyze. We did not compare with the collective Gauss-Seidel relaxation scheme since it costs far longer CPU times.
We highlight that the operation cost of each iteration in CJR scheme is cheaper than that of inexact BSR scheme, but the faster multigrid convergence factors of  BSR scheme seem to pay off in term of reduced overall CPU times. See Appendix A for the implementation detail of both CJR and BSR schemes (including IBSR).

\subsection{Example 1: $\beta=0$ without control constraints}
In this example, we choose $f$ and $g$ such that the exact solution of optimality system (\ref{opt1U}) read 
\[
y(\bm x)=\sin(2\pi x_1)\sin(2\pi x_2)\exp(x_1+x_2),\quad p(\bm x)=\sin(2\pi x_1)\sin(2\pi x_2)\exp(x_1-x_2).
\]

\begin{table}[hbt!]
	\centering   
	\caption{Measured multigrid convergence factors, $\varrho_W, \varrho_V$ vs. LFA predictions, $\mu^{\nu}_{\rm opt, CJR}$, for CJR scheme (with $\alpha=10^{-6}$). 
	}
	\begin{tabular}{|c|c||ccc|}\hline
		
		$q$,$N$&& $\nu$=1  &$\nu$=2 & $\nu$=3       \\   \hline
		\multirow{3}{*}{$2,256$}
		&$\mu^{\nu}_{\rm opt, CJR}$&  0.600 & 0.360 & 0.216\\ 
		&$\varrho_W$& 0.610&0.371&0.227 	 \\	
		&$\varrho_V$& 0.612&0.388&0.271 	 \\	
		\hline  
		\multirow{3}{*}{$3,243$}
		&$\mu^{\nu}_{\rm opt, CJR}$& 0.778& 0.605& 0.471\\  
		&$\varrho_W$&  0.785&0.617&0.485	 \\	
		&$\varrho_V$&  0.783&0.617&0.484	 \\	
		\hline
		\multirow{3}{*}{$4,256$}
		&$\mu^{\nu}_{\rm opt, CJR}$& 0.864& 0.747 & 0.645\\  
		&$\varrho_W$&  0.870&0.757&0.658 \\	
		&$\varrho_V$&  0.870&0.757&0.658 \\		  
		\hline
	\end{tabular} 
	\label{Ex1_CJR_factors}
\end{table}

\begin{table}[hbt!]
	\centering   
	\caption{Measured multigrid convergence factors, $\varrho_W, \varrho_V$ vs. LFA predictions, $\mu^{\nu}$, for exact and inexact BSR scheme (with $\alpha=10^{-6}$). 
	}
	\begin{tabular}{|c|c||ccc|cccc|}\hline
		&&\multicolumn{3}{c|}{Exact BSR}  &\multicolumn{4}{c|}{Inexact BSR (IBSR)}
		\\ 	\hline 	
		&&\multicolumn{3}{c|}{ }  &{1-PCG}&{\textbf{2-PCG}} &{3-PCG}& {4-PCG}
		\\ 	\hline 		 	
		$q$,$N$&& $\nu$=1  &$\nu$=2 & $\nu$=3 &  $\nu$=1  &$\nu$=1 & $\nu$=1 &  $\nu$=1    \\   \hline
		\multirow{3}{*}{$2,256$} 
		&$\mu^{\nu}$& 0.333&0.111&0.037&&&&\\
		&$\varrho_W$& 0.258&0.072&0.035&	0.430&	0.267&	0.265&	0.263	 \\	
		&$\varrho_V$& 0.258&0.092&0.050&	0.433&	0.274&	0.266&	0.263	 \\	
		\hline  
		\multirow{3}{*}{$3,243$}
		&$\mu^{\nu}$&0.362 &0.131&0.047&&&&\\
		&$\varrho_W$&0.284&0.127&0.074&	0.624&	0.345&	0.297&	0.285	 \\	
		&$\varrho_V$& 0.304&0.158&0.094&	0.628&	0.344&	0.318&	0.322	 \\	
		\hline
		\multirow{3}{*}{$4,256$}
		&$\mu^{\nu}$& 0.542&0.294&0.159&&&&\\
		&$\varrho_W$& 0.462&0.214&0.106&	0.734&	0.502&	0.479&	0.470	 \\	
		&$\varrho_V$& 0.462&0.225&0.105&	0.735&	0.503&	0.481&	0.474	 \\		  
		\hline
	\end{tabular}
	\label{Ex1_BSR_factors}
\end{table}

To verify the derived LFA smoothing factors indeed predict the practical multigrid convergence factors,
we report the measured multigrid convergence factors in Table \ref{Ex1_CJR_factors} for the CJR scheme and in Table \ref{Ex1_BSR_factors} for the BSR scheme, respectively. In Table \ref{Ex1_CJR_factors}, we compute the $\mu_{\rm opt, CJR}$ from \eqref{eq:CJR-q2-opt}, \eqref{eq:CJR-q3-opt}, and \eqref{eq:CJR-q4-opt}, and we observe the measured  convergence factors $\varrho_W, \varrho_V$ match with the LFA predictions, $\mu_{\rm opt, CJR}$. Notice for $\alpha=10^{-6}$ and $h=1/256$ or $h=1/243$ there holds $\gamma^2 \approx 0$, which leads to the second situations with larger smooth factors for CJR scheme in Theorem \ref{thm:CJR-optimal}. In Table \ref{Ex1_BSR_factors}, we compute $\mu^{\nu}$ with $\omega$ chosen in Theorem \ref{thm: BSR-upper-bound}. We find that these $\mu$ ($\nu=1$) equal to the upper bounds predicted in Theorem \ref{thm: BSR-upper-bound}, since $\frac{\alpha}{h^4}\approx 10^3$, and, then $\lambda_2= \frac{1+\alpha a^2}{1+\alpha ab}\approx \frac{\alpha a^2}{\alpha ab}=\frac{a}{b}$, see \eqref{eq:lambda2-expression}, which reduces to the mass approximation to the scalar Laplacian.  Although the measured W-cycle and V-cycle convergence factors are smaller than LFA predictions, it is not surprising since  LFA does not take account of the influence of boundary conditions. Table \ref{Ex1_BSR_factors} also includes the results of four inexact BSR schemes with the Schur complement system being solved by only 1--4 PCG iterations, which shows 2--3 PCG iterations are sufficient to achieve similar convergence factors as the expensive exact BSR scheme (compare the columns with $\nu=1$), and there is no benefit to use 4 or more PCG iterations. 
In other words, BSR only requires a cheap rough approximate solution to the Schur complement system to be an effective smoother that effectually dampens high-frequency errors.
Hence,  we will use the inexact BSR based on 2 PCG iterations for better computational efficiency.  
In fact,  using a few weighted Jacobi iterations as the inexact solver also gives similar numerical results and hence they are omitted for the same of brevity.

To verify our theoretical conclusions in Theorem \ref{thm:CJR-optimal}, in Figure \ref{FigEx1JAC} we compare the standard CJR relaxation based on a fixed damping parameter $\omega=4/5$ (denote as CJR-F) often used in other studies, for example, \cite{engel2011multigrid}, and our derived optimized  CJR relaxation based on a $\gamma$-dependent damping parameter (by Theorem \ref{thm:CJR-optimal}), which shows the  CJR relaxation indeed provides faster convergence rates whenever $\gamma^2=h^4/(16\alpha)\gg 6$ (e.g., with a  very small $\alpha$ or a large $h$).
However, as shown in the right plot of Figure \ref{FigEx1JAC}, there is no obvious convergence difference between CJR and CJR-F when $\gamma^2\approx 6$ in the finest level.
In particular, from the first situations in Theorem \ref{thm:CJR-optimal}, the multigrid convergence rate with the  CJR scheme would become even smaller if $\gamma^2$ gets larger (in fact, $\lim_{\gamma\rightarrow \infty }\mu_{\rm opt, CJR} = \sqrt{\frac{\gamma^2}{(4+\gamma^2)(1+\gamma^2)}}\rightarrow  0$), as shown in Figure \ref{FigEx1JAC}.
Hence, the optimized CJR relaxation works very effectively for a very small $\alpha$ while fixing $h$, which removes the undesirable convergence condition $\alpha\ge c h^4$ obtained in the existing multigrid algorithms \cite{Schberl2011,engel2011multigrid}.
Our LFA is crucial to this surprising result.
\begin{figure}[hbt!]
	\begin{center}
		\includegraphics[width=1\textwidth]{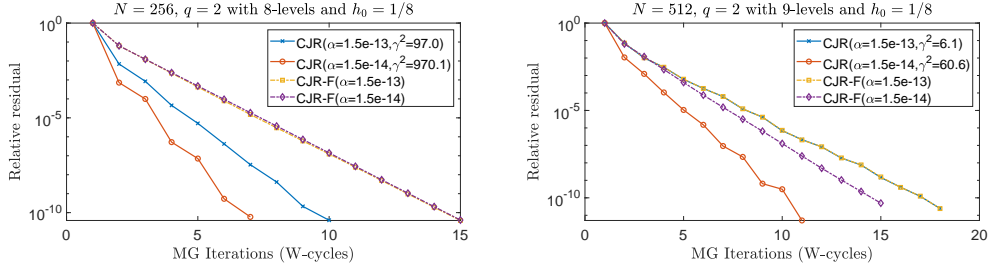}   
	\end{center} 
	\caption{The comparison of CJR with a fixed damping weight, $4/5$, (CJR-F) and our optimized CJR (depending on $\gamma$).}
	\label{FigEx1JAC}
\end{figure}

\begin{figure}[hbt!]
	\begin{center}
		\includegraphics[width=1\textwidth]{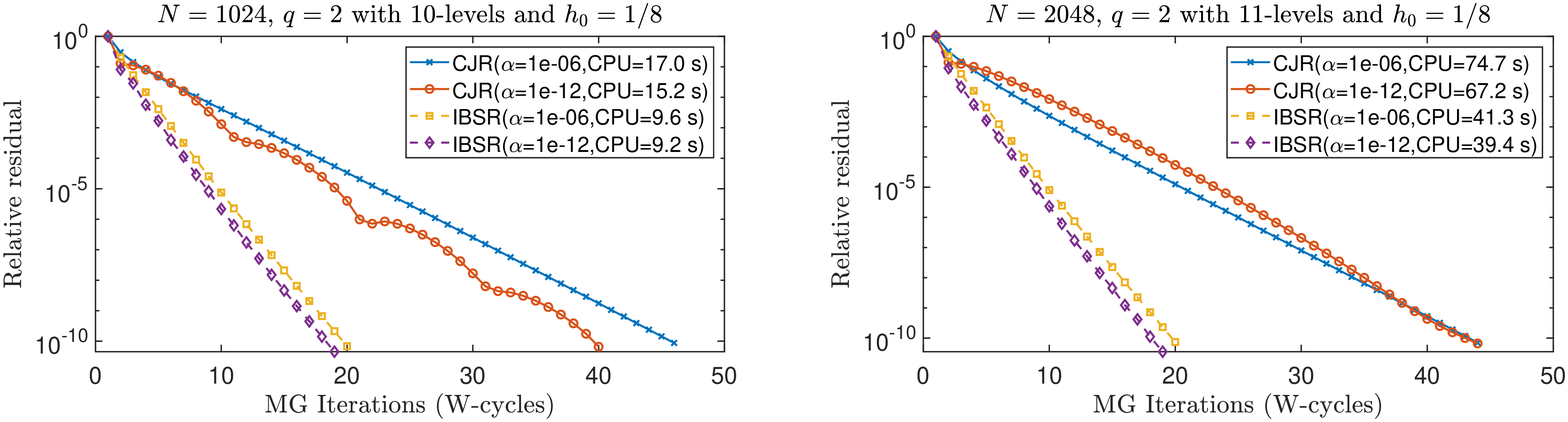}
		\includegraphics[width=1\textwidth]{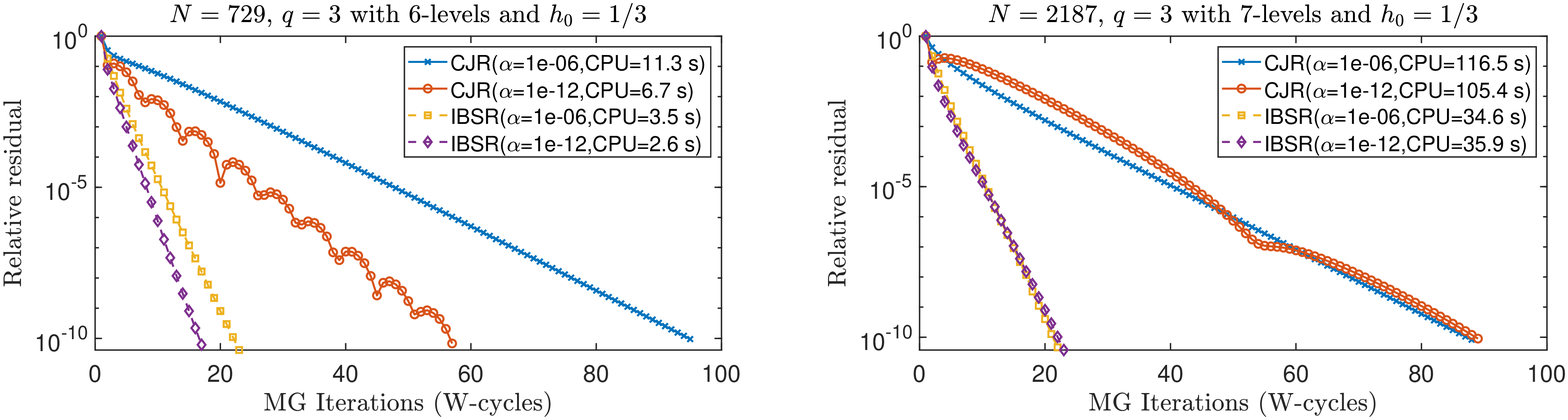}
		\includegraphics[width=1\textwidth]{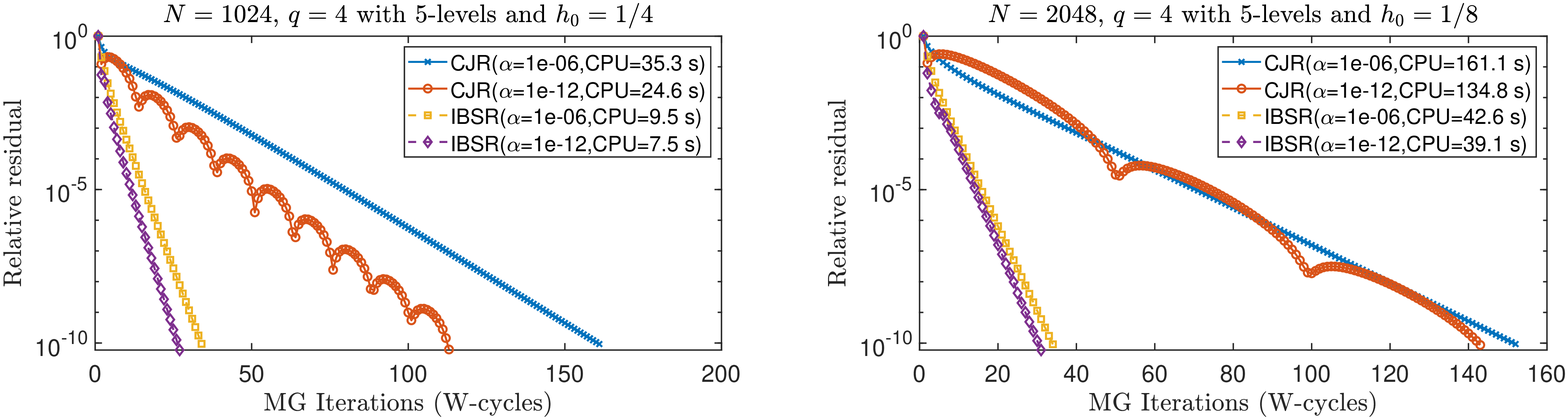}
	\end{center} 
	\caption{The comparison of multigrid iterations and CPU times of CJR and IBSR schemes for different $N$, $\alpha$ and $q$ (top row: $q=2$, middle row: $q=3$, bottom row: $q=4$). Notice the system sizes corresponding to $q=3$ are different from $q=2,4$ for suitable coarsening.}
	\label{FigEx1}
\end{figure}
Figure \ref{FigEx1} compares the multigrid iterations and CPU times for both CJR and BSR schemes with different mesh size $N$, regularization parameter $\alpha=10^{-6},10^{-12}$, and coarsen ratios $q=2,3,4$.
As anticipated, our proposed inexact BSR scheme with faster convergence factors takes much less number of multigrid iterations than the CJR scheme,
and it also achieves about 2--4 times speed up in CPU times.
In particular, our inexact BSR scheme leads to only mildly degrading convergence rates with respect to the coarsen ratios $q=2,3,4$, which is a significant advantage over the CJR scheme that shows greatly deteriorated convergence rates.
Notice that IBSR scheme for a very small $\alpha$ (compared with a fixed $h^4$) seems lead to faster convergence rates, which is expected since the corresponding Schur systems become more diagonally dominant. The observed interesting non-monotone  convergence of the CJR scheme with $\alpha=10^{-12}$ is likely due to our used $\gamma$-dependent relaxation parameters at different levels.
For the CJR scheme, there is very little or even no computational benefit to use the non-standard coarsen factors $q=3,4$ other than $q=2$ for the second situations in Theorem \ref{thm:CJR-optimal}, but the inexact BSR scheme seems to work reasonably well with the non-standard coarsen factors $q=3,4$.
Notice that a larger $q$ leads to less levels of coarse operators with much smaller dimensions, which implies the inexact BSR scheme is especially suitable for those large-scale problems that may exceed the memory limit if using standard coarsening.

\subsection{Example 2: $\beta>0$ with control constraints}
In this example, we consider the following data:
\[
f\equiv 0,\quad g(\bm x)=\sin(2\pi x_1)\sin(2\pi x_2)\exp(2 x_1)/6,\quad u_0=-30, \quad u_1=30.
\]

\begin{figure}[hbt!]
	\begin{center}
		\includegraphics[width=1\textwidth]{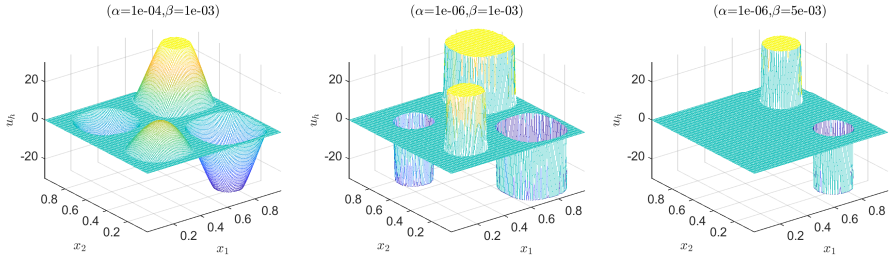} 
	\end{center} 
	\caption{Comparison of computed optimal control  with different values of $\alpha$ and $\beta$ ($N=128$).
		Notice that a larger $\beta$ leads to more sparsity,
		while a smaller $\alpha$ gives larger active set (i.e.,  the control constraints are active or attained).
	}
	\label{FigEx2control}
\end{figure}  
\begin{figure}[hbt!]
	\begin{center}
		\includegraphics[width=1\textwidth]{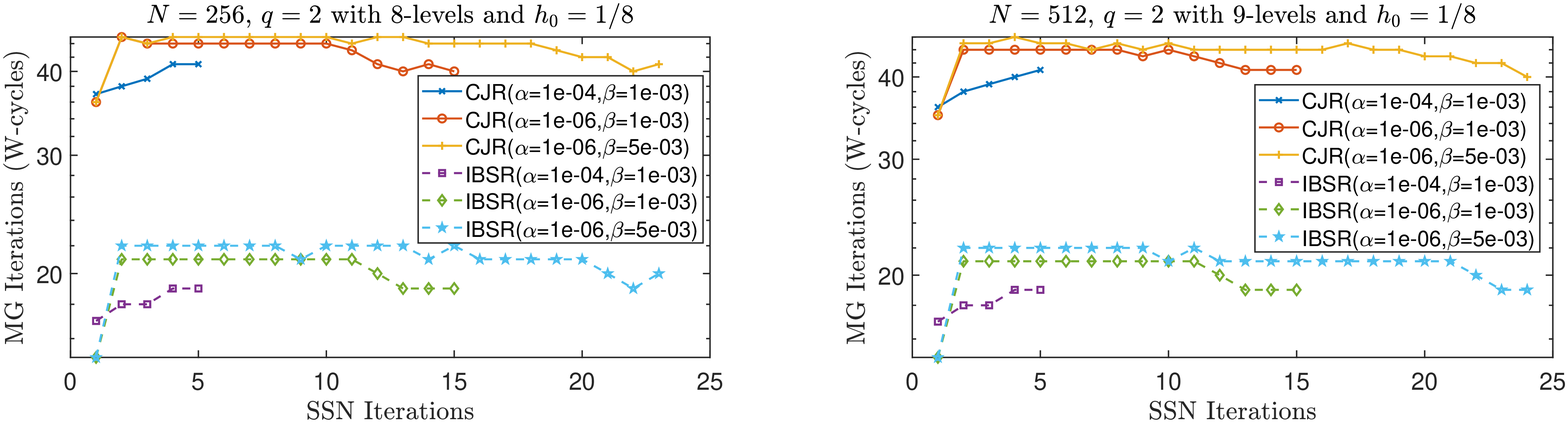} 
		\includegraphics[width=1\textwidth]{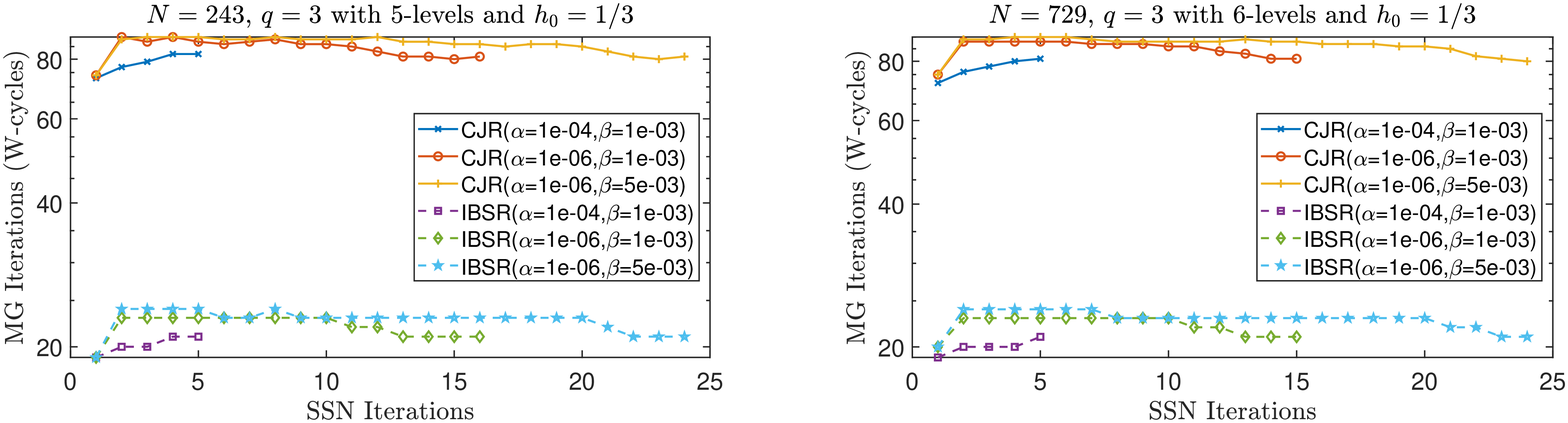} 
		\includegraphics[width=1\textwidth]{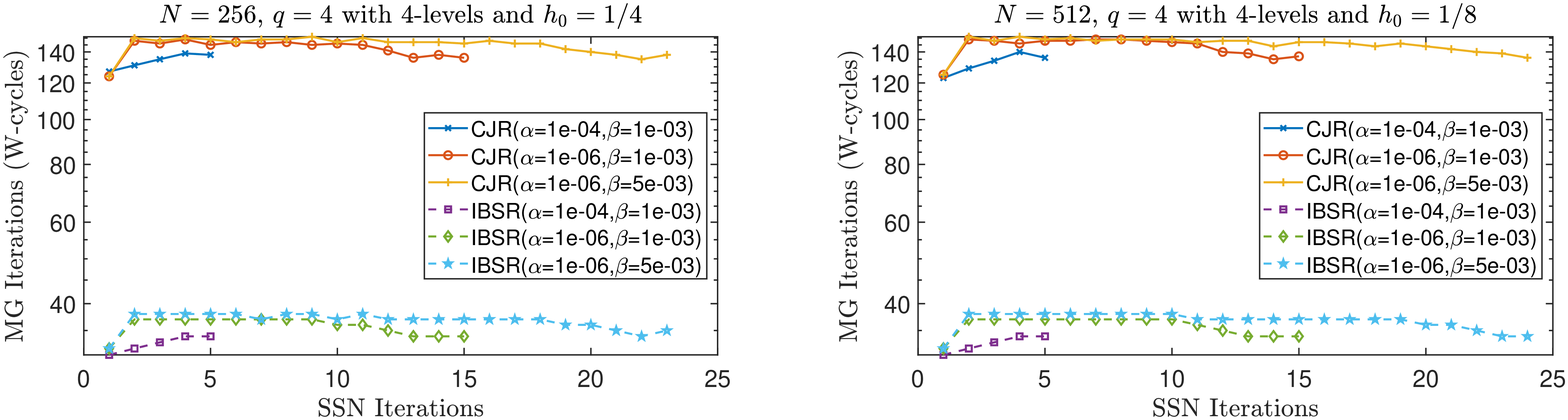} 
	\end{center} 
	\caption{The comparison of multigrid W-cycle iteration numbers  of CJR and IBSR schemes with different $N$, $\alpha,\beta$, and $q$ (top row: $q=2$, middle row: $q=3$, bottom row: $q=4$). Notice the system sizes corresponding to $q=3$ are different from $q=2,4$ for suitable coarsening.}
	\label{FigEx2W}
\end{figure}
Figure \ref{FigEx2control} illustrates how the values of parameters $\alpha$ and $\beta$ affect the shape and sparsity pattern of the computed optimal control, where a larger $\beta$ leads to more sparse control and a smaller $\alpha$ gives larger active set (i.e.,  the control constraints are active).  The knowledge of such sparsity pattern provides a priori
information on the most effective regions to place control devices \cite{Stadler2007}.

In Figure \ref{FigEx2W}, we compare the used number of multigrid iterations (W cycles) of CJR  and IBSR schemes at each SSN iteration, where the required number of SSN iterations seem to depend on the regularization parameters $\alpha$ and $\beta$.
Nevertheless, we highlight that the used multigrid iteration numbers for solving each Jacobian system are about the same as in the  unconstrained linear case with $\beta=0$ for both CJR and IBSR schemes (compare with the iteration number in Figure \ref{FigEx1} of Example 1),
which indicates our proposed multigrid solver indeed has a very robust convergence rate with respect to the relevant parameters $\alpha$ and $\beta$ that determines the 0-1 pattern of $G^{(k)}_h$ in the (1,2) block of Jacobian matrix.  Again, we observe that  the proposed IBSR scheme takes much less iteration numbers than the CJR scheme for $q=2, 3, 4$, to achieve the tolerance. Hence IBSR is recommended in practice.

\section{Conclusions} \label{SecFinal}
In this paper, we first performed LFA with coarsening by two, three, and four  for the widely used CJR scheme for elliptic optimal control problems to obtain the optimal damping parameter and optimal smoothing factor that were not available in literature.
As an improvement of the CJR scheme, we then proposed and analyzed a mass-based BSR scheme, which was shown to deliver faster convergence rates than the CJR scheme. Numerical results from both unconstrained and constrained examples confirmed the parameter-robust convergence rates of both CJR and BSR schemes.
The inexact BSR scheme outperforms the CJR scheme in the practical situations with $\alpha\ge ch^4$ for some constant $c$.  Moreover, Coarsening by three or four with BSR is competitive with coarsening by two. 
It is possible to generalize our approach to the  more difficult control problems with state constraints \cite{Borzi2008,Gong2010,Pearson2012,Casas2014,Brenner2017,Brenner2018}.
The proposed standalone solvers can be used as an effective preconditioner for Krylov subspace methods, which will be investigated in the future.

\appendix

\section{Practical implementation of the CJR and BSR schemes}
In this appendix, we briefly describe the implementation of both CJR and BSR schemes, with the goal of providing a overview of each scheme's main operation costs.

Recall the CJR and BSR scheme reads
\begin{equation}\label{eq:relaxation-scheme2}
	v_h^{k+1}=v_h^{k}+\omega B^{-1}_h(b_h-A_hv_h^k),
\end{equation}
where $B_h=\begin{bmatrix}
	D_h& -  I/\alpha\\
	I & D_h
\end{bmatrix}$ with $D_h={\rm diag}(L_h)$ for CJR and $B_h=\begin{bmatrix}
	C_h& -  I/\alpha\\
	I & L_h
\end{bmatrix}$ with $C_h=Q_h^{-1}$ for BSR.

Let $\bmt r_f\\r_g \emt=b_h-A_hv_h^k$ and $\bmt w_f\\w_g \emt=B_h^{-1}\bmt r_f\\r_g \emt$.
Then each relaxation iteration needs to solve  
\[
\bmt
D_h& -  I_h/\alpha\\
I_h & D_h
\emt 
\bmt w_f\\w_g \emt=\bmt r_f\\r_g \emt \qquad\text{and}\qquad
\bmt C_h& -  I_h/\alpha\\
I_h & L_h\emt 
\bmt w_f\\w_g \emt=\bmt r_f\\r_g \emt
\]
for CJR and BSR, respectively.
Such block systems can be solved by block Gaussian elimination.

For CJR scheme, the update can be computed by the following two steps:
\begin{eqnarray}
	w_g&=& (D_h+ D^{-1}_h/\alpha)^{-1}\left(r_g- D^{-1}_h r_f\right), \label{eq:CJR-stage1}\\
	w_f&=& D^{-1}_h (r_f+ w_g/\alpha),\nonumber
\end{eqnarray}
where $D_h^{-1}$ and $(D_h+ D^{-1}_h/\alpha)^{-1}$ are fast to compute since $D_h$ is diagonal.

For BSR scheme, by $C^{-1}_h=Q_h$, the update  can be computed in two steps:  
\begin{eqnarray}
	w_g&=&(L_h+ Q_h/\alpha)^{-1}\left(r_g- Q_h r_f\right), \label{eq:BSR-stage1}\\
	w_f&=& Q_h (r_f+ w_g/\alpha),\nonumber
\end{eqnarray} 
where the matrix $(L_h+  Q_h/\alpha)$ is expensive to invert exactly.  
In several applications \cite{he2021novel}, it has been shown that an inexact solve of the Schur complement system \eqref{eq:BSR-stage1} is sufficient to obtain similar  convergence rates as the expensive exact solve. There are many  fast iterative algorithms, such as weighted Jacobi method, multigrid V-cycles, and PCG method, to approximately solve the symmetric and positive definite (SPD) Schur system \eqref{eq:BSR-stage1}. 
Based on the results in Example 1 (see Table \ref{Ex1_BSR_factors}), we suggest to inexactly solve the Schur system \eqref{eq:BSR-stage1} by using 2 PCG iterations with the   diagonal preconditioner $\textrm{diag}(L_h+  Q_h/\alpha)$. 
It is desirable to develop more efficient inexact solvers for the Schur system \eqref{eq:BSR-stage1} based on its special structure for the purpose of effective smoothing.
Obviously, the above described CJR and BSR schemes with minimal modification can also be applied to handle the Jacobian system (\ref{opt1Jh}).
%

\bibliographystyle{siamplain}
\bibliography{Control_Laplacebib,EllipticControl,../LFA,../waveControl,../waveControl2}
\end{document}